\newtheorem{thm}{Theorem}[section]
\newtheorem{cor}[thm]{Corollary}
\newtheorem{prop}[thm]{Proposition}
\newtheorem{lem}[thm]{Lemma}
\theoremstyle{remark}
\theoremstyle{definition}
\newtheorem{defn}[thm]{Definition}
\renewcommand{\bar}{\overline}
\newcommand{\Hdim}{\mathrm{Hdim}}
\newcommand{\A}{{\mathbb{A}}}
\renewcommand{\P}{{\mathbb{P}}}
\newcommand{\F}{{\mathbb{F}}}
\newcommand{\Z}{{\mathbb{Z}}}
\newcommand{\uG}{{\underline{G}}}
\newcommand{\uU}{{\underline{U}}}
\newcommand{\uX}{{\underline{X}}}
\newcommand{\uY}{{\underline{Y}}}
\newcommand{\cG}{{\mathcal{G}}}
\newcommand{\Aut}{\mathrm{Aut}\,}
\newcommand{\Rad}{\mathrm{Rad}\,}
\newcommand{\Soc}{\mathrm{Soc}\,}
\newcommand{\Out}{\mathrm{Out}\,}
\newcommand{\Spec}{\mathrm{Spec}\;}
\newcommand{\s}{\sigma}
\newcommand{\Alt}{{\raise 2pt\hbox{$\scriptstyle\bigwedge$}}}
\newcommand{\go}{\rightarrow}
\newcommand{\e}{\epsilon}
\begin{document}
\title{Words, Hausdorff dimension and randomly free groups}

\author{Michael Larsen}
\email{mjlarsen@indiana.edu}
\address{Department of Mathematics\\
    Indiana University \\
    Bloomington, IN 47405\\
    U.S.A.}

\author{Aner Shalev}
\email{shalev@math.huji.ac.il}
\address{Einstein Institute of Mathematics\\
    Hebrew University \\
    Givat Ram, Jerusalem 91904\\
    Israel}

    \subjclass[2010]{Primary 20E26, 20P06; Secondary 20D06, 20G40}

\thanks{ML was partially supported by NSF grant DMS-1401419.
AS was partially supported by ERC advanced grant 247034, BSF grant 2008194,
ISF grant 1117/13 and the Vinik Chair of mathematics which he holds.}

\begin{abstract}
We study fibers of word maps in finite and residually finite groups, and derive various applications.
Our main result shows that, for any word $1 \ne w \in F_d$ there exists $\e > 0$ such that
if $\Gamma$ is a residually finite group with infinitely many non-isomorphic non-abelian
upper composition factors, then all fibers of the word map $w:\Gamma^d \rightarrow \Gamma$
have Hausdorff dimension at most $d -\e$.

We conclude that profinite groups $G := \hat\Gamma$, $\Gamma$ as above, satisfy no probabilistic identity, and
therefore they are  \emph{randomly free}, namely, for any $d \ge 1$,
the probability that randomly chosen elements $g_1, \ldots , g_d \in G$
freely generate a free subgroup (isomorphic to $F_d$) is $1$.
This solves an open problem from \cite{DPSS}.

Additional applications and related results are also established. For example, combining our
results with recent results of Bors, we conclude that a profinite group in which the set of elements
of finite odd order has positive measure has an open prosolvable subgroup. This may be regarded as
a probabilistic version of the Feit-Thompson theorem.

\end{abstract}

\maketitle

\newpage

\section{Introduction}

In the past few decades there has been considerable interest in the theory of word maps,
see for instance \cite{Bo, LiS, L, S, LS1, LST, GT, N2}, as well as Segal's monograph \cite{Se}
and the references therein.
Many of these works focus on the image of word maps on finite simple groups
and on a related Waring type problem. There is also increasing interest
in fibers of word maps and related problems, see \cite{LS2, LS3, B1, B2}.

In this paper we prove various results showing that fibers of word maps
are small, not just in a measure-theoretic sense, but also in the stronger sense
of Hausdorff dimension. Our results apply for a wide family of finite and infinite groups,
well beyond the family of finite simple groups.

The main result of this paper is the following.

\begin{thm}
\label{newmain}
Let $\Gamma$ be a residually finite group with infinitely many
non-isomorphic non-abelian upper composition factors.
Let $d \ge 1$ and let $w \in F_d$ be a non-trivial word.
Then there exists $\e > 0$ depending only on $w$ such that the
Hausdorff dimension of any fiber of the associated word map
$w:\Gamma^d \rightarrow \Gamma$ is at most $d - \e$.
\end{thm}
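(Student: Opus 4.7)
My plan is to reduce the Hausdorff-dimension estimate to a finite-group fiber bound. Fix a descending chain $\{N_i\}$ of open normal subgroups of $\hat\Gamma$ with $\bigcap_i N_i = 1$, with respect to which the Hausdorff dimension is computed, and set $Q_i := \Gamma/N_i$. Writing $\pi_i : \Gamma^d \to Q_i^d$ for the natural projection and $\bar g_i$ for the image of $g$, one has $\pi_i(w^{-1}(g)) \subseteq w_{Q_i}^{-1}(\bar g_i)$, so that $|w^{-1}(g)\,N_i^d/N_i^d| \le |w_{Q_i}^{-1}(\bar g_i)|$. The theorem therefore follows once we prove a uniform bound $|w_{Q_i}^{-1}(x)| \le |Q_i|^{d-\e}$ for $i$ large and $x\in Q_i$ arbitrary. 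The target is thus a finite-group statement: for every non-trivial $w \in F_d$, there exist $\e=\e(w)>0$ and $M=M(w)$ such that any finite group $G$ admitting a non-abelian simple composition factor $S$ of order $\ge M$ satisfies $|w_G^{-1}(x)| \le |G|^d / |S|^{\e}$ for all $x\in G$.

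I would prove this finite-group bound by working through the chief series of $G$. Given a non-abelian chief factor $V \cong S^k$ with $|S|\ge M$, projecting onto $G/V$ and exploiting the normality of $V$ expresses each fiber of $w_G$ as a union indexed by $\bar y \in (G/V)^d$, whose pieces are fibers of a ``twisted'' word map $V^d \to V$ of the shape $v \mapsto w(yv)w(y)^{-1}$ for a chosen lift $y$. These twisted maps on $V = S^k$ should be amenable to the Larsen--Shalev type fiber bounds on word maps on non-abelian finite simple groups (as in \cite{LS2, LST} and related work), combined with a Goursat/Scott analysis of the normal subgroup structure of $S^k$ to handle the full product. Iterating across all non-abelian chief factors of $G$ should accumulate the required exponent saving.

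To close the argument I would invoke the hypothesis. Since $\Gamma$ has infinitely many non-isomorphic non-abelian upper composition factors, and only finitely many non-abelian finite simple groups have order below any given bound, for every threshold $M'$ and $i$ sufficiently large the quotient $Q_i$ admits a non-abelian composition factor of order $\ge M'$; in fact, many such factors, with orders tending to infinity. Feeding this into the finite-group bound above yields the desired Hausdorff-dimension estimate.

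The principal obstacle I foresee is converting a ``savings in a non-abelian composition factor'' into a uniform power saving in $|Q_i|$ itself. A priori the non-abelian composition factors of $Q_i$ could be dwarfed by a solvable part, so that $|S_i|^{-\e}$ need not translate into $|Q_i|^{-\e'}$ with $\e'$ depending only on $w$. The resolution must exploit the fact that the hypothesis produces \emph{increasingly many} non-abelian upper composition factors of \emph{growing} order, so that their cumulative effect through the chief series is substantial even when individual non-abelian factors are small relative to $|Q_i|$. Establishing the twisted-fiber bounds on $V=S^k$ with uniformity in $k$ and in the twist $y$, and carrying out this accumulation cleanly along an arbitrary chain $\{N_i\}$, is where I expect the bulk of the technical work to lie.
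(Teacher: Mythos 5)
Your reduction to a finite-group statement and the obstacle you identify at the end are both on target, but your proposed route around the obstacle is a dead end, and it is not the route the paper takes. You observe, correctly, that a saving of $|S|^{-\e}$ in one non-abelian composition factor $S$ of a finite quotient $Q$ need not give a saving of $|Q|^{-\e'}$ if $Q$ has a large solvable part. Your fix is to ``accumulate'' savings across the non-abelian chief factors of $Q$; but nothing prevents the solvable radical $\Rad(Q)$ from being vastly larger than the product of the orders of \emph{all} non-abelian composition factors put together, so accumulation within a fixed $Q$ can never yield $|Q|^{d-\e}$. The paper's resolution is different and exploits a feature of the Hausdorff-dimension definition that your setup (fixing a chain $\{N_i\}$ and bounding fibers in each $Q_i$) forgoes: the $\liminf$ ranges over \emph{all} finite-index normal subgroups $\Delta\lhd\Gamma$, so it suffices to exhibit, cofinally, \emph{some} quotient $K$ with small fibers, and one is free to pass from $Q_i$ to a further quotient. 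Proposition~\ref{large} makes the right choice of $K$: among chief factors $G_1/G_2\cong T^k$ with $|T|$ large, take one of minimal index and set $K=G/C_G(G_1/G_2)$, so that $T^k\le K\le\Aut(T^k)$; a Babai--Cameron--P\'alfy bound (Lemma~\ref{BCP}) on the induced permutation group on the $k$ copies of $T$, together with $|\Out(T)|\le\log|T|$, gives $|T^k|\ge|K|^{1-\delta}$. The semisimple bound $p_{w,K}(g)\le|T^k|^{-\e}$ of Theorem~\ref{chief} then becomes $p_{w,K}(g)\le|K|^{-\e'}$, which is Theorem~\ref{mainfinite}, and the theorem follows because the hypothesis supplies quotients with non-abelian composition factors of unbounded order. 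Without the step of passing to $K$, your plan has no way to close the gap you yourself flagged.

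The step you dispatch with ``amenable to the Larsen--Shalev type fiber bounds \ldots combined with a Goursat/Scott analysis'' also hides most of the paper's technical work. After the normal-subgroup reduction (Lemma~\ref{auto}), the twisted map on a coset of $T^k$ is an \emph{automorphic} word map whose twist involves outer automorphisms of $T$, and the bounds of \cite{LS2} do not cover these. Extending them to almost simple groups of Lie type of bounded rank requires controlling the degree of the associated algebraic morphism under Frobenius twists, via a pigeonhole on exponents of $F_p$ (Lemma~\ref{pigeonhole}) to keep the degree below $q^{\dim\uG}$ uniformly in $p$, and a separate point-counting argument for Suzuki--Ree groups where one counts fixed points of endomorphisms that are roots of Frobenius (Proposition~\ref{SR-setup}). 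This occupies \S\S 2--4 and is not a routine invocation of existing simple-group results.
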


Here $F_d$ denotes a free group of rank $d$ freely generated by $x_1, \ldots x_d$.
A word $w = w(x_1, \ldots , x_d) \in F_d$ (which we write in reduced form) gives rise to
a word map $w:\Gamma^d \to \Gamma$ on any group $\Gamma$, which is induced by substitution.

By an \emph{upper composition factor} of $\Gamma$ we mean a composition factor of some
finite quotient $\Gamma/\Delta$ of $\Gamma$, where $\Delta$ is a normal subgroup of $\Gamma$
of finite index (and we assume $\Delta$ is open if $\Gamma$ is a profinite group).

For a residually finite group $\Gamma$ and $d \ge 1$, we define the \emph{Hausdorff dimension}
of a subset $S \subseteq \Gamma^d$ by
\[
\Hdim(S) = \liminf_{\Delta} \frac{\log{|S \Delta^d/\Delta^d|}} {\log{|\Gamma/\Delta|}},
\]
where $\Delta$ ranges over the finite index normal subgroups of $\Gamma$ (and again
if $\Gamma$ is profinite we also assume that $\Delta$ is open).
Thus $\Hdim(\Gamma^d) = d$ and Theorem \ref{newmain} states that,
under the assumptions of the theorem we have
\[
\Hdim(w^{-1}(g)) \le d-\e
\]
for every $g \in \Gamma$.

In fact the proof of Theorem \ref{newmain} gives a bit more; see Theorem \ref{mainfinite}
for an effective finitary version.

These results may be regarded as a far-reaching extension of
the main result of \cite{LS2}, stating that, for $w$ as above there
exists $\e = \e(w) > 0$ such that, if $T$ is a large enough
finite simple group then the fibers of $w:T^d \rightarrow T$ have
size at most $|T|^{d-\e}$. Here and throughout this paper, by a finite simple group
we mean a non-abelian finite simple group.

We now list several consequences of Theorem \ref{newmain}.
The first one deals with linear groups and strengthens results from \cite{LS3}.

It is easy to see, using strong approximation (see \cite{N1, P, W}), that a finitely
generated linear group which is not virtually solvable
has infinitely many finite simple groups of Lie type as upper composition factors.
Applying Theorem~\ref{newmain}, we deduce the following.

\begin{thm}
\label{linear}
Let $\Gamma$ be a finitely generated linear group over any field.
Suppose $\Gamma$ is not virtually solvable. Then the fibers of any word map on
$\Gamma^d$ induced by a non-trivial word $w \in F_d$ have Hausdorff dimension less then $d$.
\end{thm}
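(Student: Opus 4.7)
The plan is to deduce Theorem~\ref{linear} directly from Theorem~\ref{newmain}. Since Theorem~\ref{newmain} already delivers the Hausdorff dimension bound under the assumption that $\Gamma$ is residually finite and has infinitely many non-isomorphic non-abelian upper composition factors, I only need to verify these two properties for a finitely generated linear group $\Gamma$ that is not virtually solvable.

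Residual finiteness is immediate from Malcev's classical theorem. For the second property, the paragraph preceding the theorem already signals that strong approximation is the key tool. I would make this precise as follows. After replacing the ambient field by the subfield generated by the matrix entries of a fixed finite generating set, one may assume $\Gamma \subseteq \GL_n(k)$ for a finitely generated field $k$; spreading out further, $\Gamma \subseteq \GL_n(R)$ for some finitely generated integral domain $R$ with fraction field $k$. Let $H$ denote the Zariski closure of $\Gamma$; since $\Gamma$ is not virtually solvable, neither is $H$, so the connected component $H^\circ$ has nontrivial semisimple part. By the strong approximation results of Nori, Matthews--Vaserstein--Weisfeiler, Weisfeiler, and Pink, for all but finitely many maximal ideals $\m$ of $R$ the image of $\Gamma$ in $\GL_n(R/\m)$ contains (up to bounded index) the $(R/\m)$-points of a fixed semisimple group scheme associated to the semisimple part of $H^\circ$. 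As $R/\m$ ranges over finite fields of unbounded size, this yields composition factors isomorphic to finite simple groups of Lie type of unbounded order, and in particular infinitely many pairwise non-isomorphic such groups.

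Having secured both hypotheses, Theorem~\ref{newmain} supplies $\e = \e(w) > 0$ such that every fiber of $w : \Gamma^d \to \Gamma$ has Hausdorff dimension at most $d - \e < d$. The only substantive work is therefore the strong approximation step, and the main obstacle is the positive-characteristic case: one must control inseparability phenomena, identify the relevant semisimple part of the Zariski closure, and extract a large enough image inside $\GL_n(R/\m)$. These issues are handled by the cited references \cite{N1, P, W}, so in practice the proof of Theorem~\ref{linear} reduces to assembling these standard facts and invoking Theorem~\ref{newmain}.
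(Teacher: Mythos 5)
Your proposal is correct and follows essentially the same route as the paper: the paper likewise treats Theorem~\ref{linear} as an immediate consequence of Theorem~\ref{newmain}, citing strong approximation (\cite{N1, P, W}) to produce infinitely many non-isomorphic finite simple groups of Lie type as upper composition factors of a finitely generated linear group that is not virtually solvable (with residual finiteness via Malcev left implicit). Your write-up simply spells out the strong approximation step in more detail than the paper does.
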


The next consequence of our main result deals with probabilistic identities.
We need more notation. Given a word $w \in F_d$ and a finite group $G$, let $p_{w,G}$ denote the associated
probability distribution on $G$. Thus, for $g \in G$ we have $p_{w,G}(g) = |w^{-1}(g)|/|G|^d$.

Recall that a word $1 \ne w \in F_d$ is said to be {\em a probabilistic identity} of a residually finite
group $\Gamma$ if there exists $\delta > 0$ such that, in any finite quotient $H$ of $\Gamma$,
we have $p_{w,H}(1) \ge \delta$.
This amounts to saying that, in the profinite completion $G$ of $\Gamma$, the probability
(with respect to the normalized Haar measure on $G$) that $w(g_1, \ldots, g_d) = 1$ where $g_1, \ldots , g_d \in G$
are random elements is positive.

For example, $w=x_1^2$ is a probabilistic identity of the infinite
dihedral group $\Gamma = D_{\infty}$.

It follows from \cite{Ma} that a residually finite group which satisfies
the probabilistic identity $x_1^2$ is finite-by-abelian-by-finite.
A similar conclusion holds for the probabilistic identity $[x_1,x_2]$,
as follows from the earlier paper \cite{Ne}, which is applied in \cite{Ma}.
However, very little is known about groups satisfying more general probabilistic identities.

Now, let $\Gamma$ be a residually finite group with infinitely many
non-isomorphic non-abelian upper composition factors, and let $1 \ne w \in F_d$.
By Theorem \ref{newmain} we see that
$\Gamma$ has arbitrarily large finite quotients $H$ such that
$p_{w,H}(1) \le |H|^{-\e}$, which tends to $0$.
This implies the following new result.

\begin{thm}
\label{probid}
Let $\Gamma$ be a residually finite group with infinitely many
non-isomorphic non-abelian upper composition factors.
Then $\Gamma$ does not satisfy any probabilistic identity.
\end{thm}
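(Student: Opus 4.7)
The plan is to argue by contradiction. Suppose $1 \ne w \in F_d$ is a probabilistic identity of $\Gamma$, so that there exists $\delta > 0$ with $p_{w,H}(1) \ge \delta$ for every finite quotient $H$ of $\Gamma$. My goal is to exhibit a sequence of finite quotients $H_n$ along which $p_{w,H_n}(1) \to 0$, contradicting this lower bound.

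The first ingredient is a sequence of arbitrarily large finite quotients of $\Gamma$ to which the main theorem applies. Since, by the classification of finite simple groups, only finitely many non-abelian finite simple groups have any fixed order, the hypothesis that $\Gamma$ has infinitely many non-isomorphic non-abelian upper composition factors forces those factors to have unbounded order. Consequently $\Gamma$ admits finite quotients $H_n$ with $|H_n| \to \infty$, each possessing a non-abelian composition factor whose order also tends to infinity.

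The second ingredient is the effective finitary version of Theorem~\ref{newmain}, promised as Theorem~\ref{mainfinite}. Applied to the quotients $H_n$, it furnishes a constant $\e > 0$, depending only on $w$, such that $|w_{H_n}^{-1}(1)| \le |H_n|^{d - \e}$ for all sufficiently large $n$. Dividing through,
\[
p_{w,H_n}(1) = \frac{|w_{H_n}^{-1}(1)|}{|H_n|^d} \le |H_n|^{-\e} \longrightarrow 0,
\]
which contradicts $p_{w,H_n}(1) \ge \delta$ for all $n$.

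The conceptual obstacle, and the reason Theorem~\ref{newmain} alone is not literally enough, is that the Hausdorff-dimension bound $\Hdim(w^{-1}(1)) \le d - \e$ controls only the image of the \emph{global} fiber $w_\Gamma^{-1}(1)$ in $(\Gamma/\Delta)^d$, whereas the probabilistic identity lower-bounds the full \emph{local} fiber $w_H^{-1}(1)$, which is typically larger (solutions in $H$ need not lift to solutions in $\Gamma$). Bridging this gap is precisely the content of the finitary strengthening, Theorem~\ref{mainfinite}; with that uniform pointwise bound in hand, the short argument above concludes the proof.
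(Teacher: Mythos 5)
Your strategy is the same as the paper's: the hypothesis forces non-abelian upper composition factors of unbounded order (CFSG is not needed for this --- there are only finitely many groups of any given order), and the contradiction is obtained from the effective finitary statement, Theorem~\ref{mainfinite}, rather than from the Hausdorff-dimension statement itself; your closing remark about why Theorem~\ref{newmain} alone is not literally sufficient is precisely the point of the paper's aside that ``the proof of Theorem~\ref{newmain} gives a bit more.''

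There is, however, one misstep: you apply Theorem~\ref{mainfinite} to the quotients $H_n$ and extract the bound $|w_{H_n}^{-1}(1)|\le |H_n|^{d-\e}$, but that is not what the theorem asserts, and as a general statement it is false. Theorem~\ref{mainfinite} only produces a \emph{further quotient} $K_n$ of $H_n$ with $|K_n|\ge |S_n|$ and $p_{w,K_n}(g)\le |K_n|^{-\e}$ for all $g$. The stronger claim you attribute to it fails, e.g., for $w=x^2$ and $H=S\times E$ with $S$ a fixed large simple group and $E$ an elementary abelian $2$-group of enormous order: then $p_{w,H}(1)=p_{w,S}(1)$ is a positive constant independent of $|E|$, while $|H|^{-\e}\to 0$; this is exactly why the paper needs Proposition~\ref{large} and the passage to a quotient $K$ in which the semisimple layer $T^k$ occupies almost all of $K$. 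Fortunately the repair is one line and stays inside your argument: $K_n$ is again a finite quotient of $\Gamma$, so the assumed probabilistic identity gives $p_{w,K_n}(1)\ge\delta$, whereas $p_{w,K_n}(1)\le |K_n|^{-\e}\le |S_n|^{-\e}\to 0$, a contradiction. With that substitution your proof coincides with the paper's.
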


This result generalizes Theorem 1.2 in \cite{LS3}, showing that
a finitely generated linear group which satisfies a probabilistic
identity is virtually solvable.
It also enables us to solve an open problem regarding randomly free groups.

We say that a profinite group $G$ is \emph{randomly free} if, for every $d \ge 1$,
the probability that a $d$-tuple of elements of $G$ freely generates a free subgroup
is $1$. We use \emph{freely generate} in the sense of abstract group theory,
i.e., no non-trivial word evaluated at the chosen elements should give the identity.
A residually finite group is said to be randomly free if its profinite completion
is randomly free.

See \cite{E} and \cite{Sz} for earlier results on groups in
which almost all subgroups are free.

We need the following straightforward observation.

\begin{lem}
\label{rfree}
A residually finite group $\Gamma$ is randomly free if and only if it does not satisfy
any probabilistic identity.
\end{lem}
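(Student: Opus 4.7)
The plan is as follows. Let $G = \hat\Gamma$ be the profinite completion, with normalized Haar measure $\mu$. For each $d \ge 1$, a tuple $(g_1,\ldots,g_d) \in G^d$ freely generates a free subgroup if and only if $w(g_1,\ldots,g_d) \ne 1$ for every non-trivial $w \in F_d$. Hence the set of $d$-tuples that fail to freely generate equals
\[
B_d \;=\; \bigcup_{1 \ne w \in F_d} w^{-1}(1) \;\subseteq\; G^d.
\]
Each fiber $w^{-1}(1)$ is closed (hence Borel-measurable) since the word map $w\colon G^d \to G$ is continuous, and the index set $F_d$ is countable. So I want to reduce everything to measuring one fiber at a time.

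The key link between the measure-theoretic statement and the definition of probabilistic identity is the identity
\[
\mu(w^{-1}(1)) \;=\; \inf_{\Delta} p_{w,G/\Delta}(1),
\]
where $\Delta$ runs over open normal subgroups of $G$. Indeed, $w^{-1}(1) = \bigcap_\Delta w^{-1}(\Delta)$, and the sets $w^{-1}(\Delta)$ are clopen, form a decreasing family, and have measure $p_{w,G/\Delta}(1)$ by definition of Haar measure. This identity (essentially already observed in the paragraph defining probabilistic identity) shows that $1 \ne w \in F_d$ is a probabilistic identity of $\Gamma$ if and only if $\mu(w^{-1}(1)) > 0$.

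Given this, the lemma follows by countable additivity in both directions. If $\Gamma$ satisfies a probabilistic identity $1 \ne w \in F_d$, then $\mu(B_d) \ge \mu(w^{-1}(1)) > 0$, so $\Gamma$ is not randomly free. Conversely, if $\Gamma$ satisfies no probabilistic identity, then $\mu(w^{-1}(1)) = 0$ for every $d \ge 1$ and every non-trivial $w \in F_d$; countable subadditivity then gives $\mu(B_d) = 0$ for all $d$, so $\Gamma$ is randomly free.

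There is no serious obstacle here: once one translates ``freely generate'' into ``satisfy no non-trivial relation'' and notes that $F_d$ is countable, the lemma is a one-line application of countable additivity, matching the paper's characterization of it as a straightforward observation.
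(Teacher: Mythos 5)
Your proof is correct and follows essentially the same route as the paper: translate free generation into the non-vanishing of all non-trivial word maps, use countability of $F_d$ together with $\sigma$-additivity of Haar measure, and note the reverse direction is immediate. The only addition is your explicit verification that $\mu(w^{-1}(1))=\inf_\Delta p_{w,G/\Delta}(1)$, which the paper treats as part of the definition of a probabilistic identity.
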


To show this, let $G$ be the profinite completion of $\Gamma$.
Note that $g_1, \ldots , g_d \in G$ freely generate a free
subgroup of $G$ if and only if $w(g_1, \ldots , g_d) \ne 1$
for every $1 \ne w \in F_d$. Now, suppose $\Gamma$ does not satisfy any probabilistic identity.
Then the probability that $w(g_1, \ldots , g_d) = 1$ ($g_i \in G$) is $0$ for any such word $w$.
As Haar measure is $\sigma$-additive, the probability that there exists $w\neq 1$ such that $w(g_1, \ldots , g_d) = 1$
is also $0$.  Thus, $g_1, \ldots , g_d \in G$ freely generate a free subgroup with
probability $1$, proving that $\Gamma$ is randomly free. The reverse implication is trivial.

Combining the above lemma with Theorem~\ref{probid} we obtain
the following.

\begin{thm}
\label{main}
Let $\Gamma$ be a residually finite group with infinitely many
non-isomorphic non-abelian upper composition factors. Then $\Gamma$ is randomly free.
\end{thm}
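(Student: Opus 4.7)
The plan is essentially to combine the two results immediately preceding the statement, namely Theorem~\ref{probid} and Lemma~\ref{rfree}; no further work is needed. First I would apply Theorem~\ref{probid} to the group $\Gamma$ in the hypothesis, which is residually finite with infinitely many non-isomorphic non-abelian upper composition factors, to conclude that $\Gamma$ satisfies no probabilistic identity. Then I would invoke Lemma~\ref{rfree}, which characterizes randomly free residually finite groups precisely as those not satisfying any probabilistic identity, to deduce that $\Gamma$ is randomly free.

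There is no real obstacle here, since the preceding Theorem~\ref{newmain} (via Theorem~\ref{probid}) already provides the quantitative input: for any $1 \ne w \in F_d$ one can find finite quotients $H$ of $\Gamma$ with $p_{w,H}(1) \le |H|^{-\e}\to 0$, so no single nontrivial word is a probabilistic identity. The only mildly delicate point, which is absorbed into Lemma~\ref{rfree}, is that one must pass from ``no single word fails with positive probability'' to ``almost every $d$-tuple freely generates a free group.'' This passage is handled by $\sigma$-additivity of the Haar measure on the profinite completion $G = \hat\Gamma$: the set of $d$-tuples $(g_1,\ldots,g_d) \in G^d$ that fail to freely generate $F_d$ is the countable union, over the countable set $F_d \setminus \{1\}$, of the individual null sets $w^{-1}(1)$, and hence is itself null.

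In short, the proof is a direct concatenation: Theorem~\ref{newmain} gives Hausdorff dimension drop for every word-fiber, which in turn forbids probabilistic identities (Theorem~\ref{probid}), which by the Borel-Cantelli-style observation in Lemma~\ref{rfree} is exactly the randomly free property for $\Gamma$. All the genuine mathematical content has already been deployed upstream in the proof of Theorem~\ref{newmain}; the final step is a formal deduction.
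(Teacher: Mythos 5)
Your proposal is correct and matches the paper's own argument exactly: the paper derives Theorem~\ref{main} by simply combining Theorem~\ref{probid} (no probabilistic identities) with Lemma~\ref{rfree} (the $\sigma$-additivity characterization of randomly free). There is nothing further to add.
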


This immediately implies (using strong approximation) the following probabilistic Tits alternative,
which is the main result of \cite{LS3}.

\begin{cor} A finitely generated linear group is either virtually solvable
or randomly free.
\end{cor}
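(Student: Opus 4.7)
My plan is to reduce the corollary directly to Theorem~\ref{main} by verifying its hypothesis for any non-virtually-solvable finitely generated linear group. Let $\Gamma \subseteq \GL_n(k)$ be a finitely generated linear group. I first observe that $\Gamma$ is residually finite by Mal'cev's theorem (so the definitions of upper composition factors and Hausdorff dimension used in Theorem~\ref{main} apply), and I then split into cases according to whether $\Gamma$ is virtually solvable. In the virtually solvable case there is nothing to prove, so I may assume $\Gamma$ is not virtually solvable and aim to show that $\Gamma$ is randomly free.

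The main step is then to produce infinitely many non-isomorphic non-abelian upper composition factors of $\Gamma$. This is precisely the assertion the excerpt attributes to strong approximation (references \cite{N1, P, W}): after replacing $k$ by a finitely generated subfield and $\Gamma$ by its image in $\GL_n$ over a finitely generated integral domain $R \subset k$, the Zariski closure $\overline{\Gamma}$ is a linear algebraic group whose identity component has a non-trivial semisimple quotient (since $\Gamma$ is not virtually solvable). Reducing modulo maximal ideals $\m \subset R$ with residue fields $\F_q$, strong approximation yields, for all but finitely many $\m$, a surjection of a finite-index subgroup of $\Gamma$ onto the $\F_q$-points of a semisimple group, whose composition factors include finite simple groups of Lie type over $\F_q$. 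Varying the residual characteristic gives infinitely many non-isomorphic such simple groups occurring as upper composition factors of $\Gamma$.

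With this hypothesis in hand, Theorem~\ref{main} applies and concludes that $\Gamma$ is randomly free, completing the dichotomy. The only step requiring real input beyond what is already proved in the paper is the invocation of strong approximation, and that is a well-established result cited explicitly in the excerpt; the rest is bookkeeping.
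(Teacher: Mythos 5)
Your proposal is correct and follows exactly the paper's route: invoke strong approximation to show a non-virtually-solvable finitely generated linear group has infinitely many non-isomorphic non-abelian upper composition factors, then apply Theorem~\ref{main}. The extra detail you give about reductions modulo maximal ideals is just an unpacking of the strong-approximation citation the paper already relies on.
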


Theorem~\ref{main} (applied to profinite groups $G$) solves a problem raised in 2003 in \cite{DPSS}
(see Problem 7 there). A partial solution, assuming $G$ has infinitely
many alternating groups as upper composition factors, was given in 2005 by Ab{\'e}rt,
see \cite[Theorem 1.7]{Ab}.
Hence it remains to prove the theorem assuming $G$ has infinitely many
simple groups of Lie type as upper composition factors.

A recent work of Bors \cite{B1} can be used to handle the case where $G$ has classical
groups of unbounded rank as composition factors. Indeed Theorem 1.1.2 there implies
Theorem \ref{probid} in this case. Hence it remains to handle the case where $G$
has infinitely many groups of Lie type of bounded rank as upper composition factors.

After discussing various consequences of Theorem \ref{newmain}, let us now
state some results of independent interest on finite groups, which play a
crucial role in its proof. The first result deals with almost simple groups
of Lie type of bounded rank.

\begin{thm}
\label{main-bdd}
For any non-trivial word $w = w(x_1,\ldots,x_d)$ and any positive integer $r$, there exist $N, \epsilon > 0$
depending only on $w$ and $r$
such that, if $T$ is a finite simple group of Lie type of order $\ge N$ and rank $\le r$,
then for all $g_1,\ldots,g_d, g \in \Aut(T)$,
$$\bigm|\{(t_1,\ldots,t_d)\in T^d\mid w(t_1 g_1,\ldots, t_d g_d) = g \}\bigm| \le |T|^{d-\epsilon}.$$
\end{thm}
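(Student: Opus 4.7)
The plan is to extend the main fiber estimate of \cite{LS2}---which handles the untwisted case ($g_i = 1$, $g \in T$)---by absorbing the twists $g_i \in \Aut(T)$ into the word map and then applying Lang--Weil on a suitable Weil restriction. Realize $T$ modulo its center as $G^\sigma$ for a simply connected simple algebraic group $G$ over $\bar\F_p$ of rank $\le r$, so $\dim G = O(r^2)$; then $\Aut(T)$ embeds, up to bounded index, in $G(\bar\F_p) \rtimes \Gamma$, where $\Gamma$ is generated by algebraic diagram automorphisms and Frobenius powers and $|\Gamma| = O(\log|T|)$. Decompose $g_i = h_i\phi_i$ with $h_i \in G(\bar\F_p)$ and $\phi_i \in \Gamma$; the $|\Gamma|^d = (\log|T|)^{O(d)}$ possible outer types are absorbable into a factor $|T|^{\epsilon/2}$, so I fix one such type and bound its contribution.

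Using $\phi \cdot t = \phi(t) \cdot \phi$ to push all outer factors in $w(t_1 g_1, \ldots, t_d g_d)$ to the right, the left-hand side rewrites as $W(t_1, \ldots, t_d) \cdot \phi_{\mathrm{tot}}$, where $\phi_{\mathrm{tot}} \in \Gamma$ is fixed and $W$ is an algebraic word in the $h_i$'s and in the twisted variables $\phi'(t_i)$ for various $\phi' \in \Gamma$. The equation $w(t_1 g_1, \ldots, t_d g_d) = g$ is unsolvable unless the outer component of $g$ matches $\phi_{\mathrm{tot}}$, in which case it reduces to $W(t_1, \ldots, t_d) = h$ for some $h \in G(\F_q)$. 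To handle Frobenius uniformly in $q = p^n$, I would pass to the Weil restriction $\widetilde G := \Res_{\F_q/\F_p}(G)$, a connected algebraic group over $\F_p$ of dimension $n\dim G$ with $\widetilde G(\F_p) = G(\F_q) = T$, which after base change becomes $G^n$ with Frobenius acting by cyclic permutation of factors. In the Weil restriction each $\phi'(t_i)$ is a bounded-complexity permutation of coordinates, so the variety $V := \{W = h\} \subset \widetilde G^d$ is cut out by equations of bounded complexity in terms of $w$ and $r$.

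The main obstacle is the uniform dimension bound $\dim V \le nd\dim G - n\delta$ with $\delta = \delta(w, r) > 0$ independent of the $h_i$, $h$, outer type, and isogeny class of $G$. After base change to $\bar\F_p$, $V \subset G^{nd}$ is cut out by the $n$ Galois-conjugate equations $W^{(k)}(t_i^{(j)}) = h^{(k)}$, each a twisted word map on $G^d$. The key observation is that these projections involve cyclically shifted subsets of the $nd$ coordinates and are therefore ``generically independent'' in the dimension sense: applying the geometric content of \cite{LS2}---Borel's dominance theorem together with the positive codimension defect for fibers of non-trivial word maps on simple algebraic groups of bounded rank---to a single projection gives codimension $\ge \delta$, and independence multiplies this to codimension $\ge n\delta$ for the intersection. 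A semi-continuity argument across the parameter space of $h_i$'s, combined with the finiteness of isogeny types of $G$ in rank $\le r$, yields a uniform $\delta$. Finally, Lang--Weil applied to $V$ over $\F_p$ gives $|V(\F_p)| \le C p^{nd\dim G - n\delta}$ with $C$ bounded in terms of $w$ and $r$; via $|T| \asymp p^{n\dim G}$ this translates to $|T|^{d - \epsilon}$ for any $\epsilon < \delta/\dim G$, provided $|T| \ge N$ is large enough to absorb $C$ and the outer-type factor.
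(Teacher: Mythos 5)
The central obstacle in this theorem, which your proposal does not surmount, is that field automorphisms of $T$ can contribute unbounded degrees to the defining equations. If $q = p^m$ and some $\alpha_i = F_p^{j_i}\beta_i$ has $j_i$ close to $m$, the relevant equation has degree on the order of $p^{j_i} \approx q$; a degree-$\approx q$ hypersurface in $\A^N$ can have $\approx q^N$ points over $\F_q$, so naive point-counting yields nothing. Your Weil-restriction reformulation does not remove this difficulty: the assertion that ``each $\phi'(t_i)$ is a bounded-complexity permutation of coordinates'' on $\widetilde G$ is only true on $\F_p$-rational points (where the cyclic-shift identity $s_{j+1}=s_j^{(p)}$ holds), not as an identity of morphisms on $\widetilde G_{\bar\F_p}\cong G^n$. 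One can define a variety $\widetilde V$ by replacing Frobenius twists by coordinate shifts, and it does have bounded degree, but it is not defined over $\F_p$ with the twisted Frobenius structure in general; the correct object for Lang--Weil is $\widetilde V_0 = \bigcap_i \phi^{-i}(\widetilde V)$, cut out by $n$ Galois-conjugate equations and hence of degree $\lesssim D^n$ with $n$ unbounded. Lang--Weil's error term then overwhelms the main term whenever $p$ is small and $n$ is large, which is exactly the regime the theorem must cover.

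The ``generic independence'' and semicontinuity steps also do not hold up. The $n$ conjugate equations can genuinely collapse: already for $w$ inducing $t\mapsto t\cdot F_p(t)\cdots F_p^{l-1}(t)$ with $l\mid n$, the cyclic shifts of $s_1\cdots s_l$ coincide, and $\widetilde V_0$ has codimension $1$, not $n$. More to the point, the \emph{correct} codimension bound is not what you would need from independence alone but rather something tied to the Frobenius exponents $j_i$ that appear, and nothing in your argument controls these. The paper's proof supplies precisely the missing ingredient: Lemma~4.5 (the pigeonhole lemma) shows that after conjugating the automorphic word map by a power of $F_p$ --- a bijection of $T$ that preserves fiber sizes --- one may assume all exponents satisfy $j_i\le m - \lceil m/l\rceil$. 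Combined with Proposition~2.6, the single key equation then has degree $O(q^{1-1/l})$, and the elementary absolute point bound of Proposition~3.3 (not Lang--Weil, which would still struggle with constants) immediately gives $O(q^{d\dim G - 1/l})$ points, yielding the theorem with $\epsilon \approx 1/(l\dim G)$ uniformly in $p$ and $m$. Finally, your outline omits the Suzuki--Ree case, where $T=\uG_0(\bar\F_p)^F$ for an $F$ that is not itself a $q$-Frobenius; this requires the separate machinery of Proposition~3.4 and cannot be absorbed into the Weil-restriction picture over $\F_q$.
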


Combining this with results from \cite{LS2} and \cite{B1}, we show that, for any $1 \ne w \in F_d$
there exist $N, \e > 0$ such that for any almost simple group $G$ of size at least $N$
and any $g \in G$ we have $p_{w,G}(g) \le |G|^{-\e}$---see Corollary \ref{almosts} below.
This extends the main result of \cite{LS2} from simple groups to almost simple groups.
This extension is by no means routine, and it occupies a large part of our paper.

Combining the result above with other tools we deduce the following more general
theorem on so-called semisimple finite groups.

\begin{thm}
\label{chief}
Let $G$ be a finite group such that $T^k \le G \le \Aut(T^k)$
for some $k \ge 1$ and a finite simple group $T$.
Suppose $w \ne 1$ is a word. Then there exist constants $N= N(w), \e= \e(w) > 0$
depending only on $w$ such that, if $|T| \ge N$, then for any $g \in G$ we have $p_{w,G}(g) \le |T^k|^{-\e}$.
\end{thm}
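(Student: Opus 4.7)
\emph{Proof plan.} The plan is to reduce to the almost simple case (Corollary~\ref{almosts}) by conditioning on the quotient $G \to Q := G/T^k$ and analyzing the resulting twisted word equation on $T^k$. Fix a set-theoretic section $s\colon Q \to G$ and parametrize each random $g_i = t_i h_i$ with $t_i \in T^k$ uniform and $h_i = s(\bar g_i)$ independent. Using normality of $T^k$ in $G$ to move each $t_i$ leftward past the $h_j$'s (each such move replaces $t_i$ by a $G$-conjugate, which remains in $T^k$) gives the normal form
$$w(t_1 h_1,\ldots, t_d h_d) = \tilde w_{\mathbf h}(t_1,\ldots,t_d)\cdot w(h_1,\ldots,h_d),$$
where $\tilde w_{\mathbf h}(\mathbf t) = \prod_{j=1}^{|w|} t_{\iota(j)}^{c_j}$ is a twisted word on $T^k$ with indices $\iota(j) \in \{1,\ldots,d\}$ read off $w$ and conjugators $c_j \in G$ determined by $\mathbf h$ and $w$. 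Hence $p_{w,G}(g \mid \mathbf h)$ is nonzero only when $w(\mathbf h) \equiv g \pmod{T^k}$, and in that case equals $|T^k|^{-d}$ times the number of $\mathbf t$ with $\tilde w_{\mathbf h}(\mathbf t) = g\cdot w(\mathbf h)^{-1}$. It therefore suffices to prove the uniform bound
$$\bigl|\{\mathbf t \in (T^k)^d : \tilde w_{\mathbf h}(\mathbf t) = c\}\bigr| \le |T^k|^{d-\epsilon}, \qquad \forall \mathbf h, c.$$

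Next, writing each $c_j = (\alpha_j, \pi_j) \in \Aut(T)^k \rtimes S_k$ and $t_i = (t_{i,1},\ldots,t_{i,k})$, the equation $\tilde w_{\mathbf h}(\mathbf t) = c$ decomposes coordinatewise as
$$u_\ell(\mathbf t) := \prod_{j=1}^{|w|} \phi_{j,\ell}\bigl(t_{\iota(j), \pi_j(\ell)}\bigr) = c_\ell, \qquad \phi_{j,\ell}\in\Aut(T),\ \ell=1,\ldots,k,$$
with $\phi_{j,\ell}$ depending only on $\alpha_j$ and $\pi_j$. Let $\Pi = \langle \pi_1,\ldots,\pi_{|w|}\rangle \le S_k$ and let $B_1,\ldots, B_s$ be its orbits on $\{1,\ldots,k\}$. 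Coordinates in different $\Pi$-orbits involve disjoint variable sets, so the system decouples into $s$ independent blocks, the block on $B_a$ involving only the variables $\{t_{i,\ell'}: i\in\{1,\ldots,d\},\ \ell'\in B_a\}$.

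For each block of size $k' := |B_a|$, I would prove the fiber bound $|T|^{k'(d-\epsilon_0)}$ with $\epsilon_0 = \epsilon_0(w) > 0$. Trivial blocks ($k'=1$) give a single twisted word equation in the almost simple group obtained as the image of $\Stab_G(\ell)$ in $\Aut(T)$, so Corollary~\ref{almosts} applies directly since $w\neq 1$. For a transitive block ($k'>1$), the $k'$ coupled equations encode a single word equation for $w$ in a semisimple group $T^{k'} \le H \le \Aut(T^{k'})$ with $H/T^{k'}$ transitive on $B_a$. One would bound the block fiber by iteratively conditioning on $u_\ell(\mathbf t)$ for $\ell$ running through a transversal of the $\Pi|_{B_a}$-action: at each step, conditioning on a coordinate's value restricts the remaining variables so that the next coordinate's equation is a genuine twisted word equation in an almost simple group, producing another factor $|T|^{-\epsilon_0}$ from Theorem~\ref{main-bdd} (for bounded rank) or from Bors's results \cite{B1} (for unbounded rank or the alternating case). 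After $k'$ steps the accumulated savings is $|T|^{-k'\epsilon_0}$, and multiplying over blocks gives the required $|T^k|^{d-\epsilon_0}$ bound.

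The main technical obstacle is establishing non-degeneracy at each conditioning step in the transitive case: after fixing the values of some coordinate equations, the next coordinate equation must remain a non-trivial twisted word equation to which Theorem~\ref{main-bdd} applies with the same $\epsilon_0$. Intuitively this holds because the $\Pi|_{B_a}$-action provides $k'$ "rotated copies" of the same twisted word whose variable sets overlap only partially, but verifying that each conditional fiber genuinely contributes its own $|T|^{-\epsilon_0}$, rather than the system collapsing to a single trivial equation, requires careful bookkeeping of variable overlaps and invoking the full generality of Theorem~\ref{main-bdd} allowing both outer automorphism twists and shifts (the $g_j$'s in its statement).
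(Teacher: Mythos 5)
Your overall strategy—condition on the $T^k$-cosets, move the permutation parts of the conjugators out, decompose the resulting equation on $T^k$ coordinatewise, and reduce each coordinate to a twisted (automorphic) word equation handled by Corollary~\ref{almosts}—is exactly the paper's strategy, and the setup up to the coordinatewise system $u_\ell(\mathbf{t}) = c_\ell$, $\ell = 1,\ldots,k$, matches Equation~(\ref{1}) in the proof of Theorem~\ref{chief}. The observation that coordinates in distinct $\Pi$-orbits use disjoint variable sets is correct but turns out not to be needed.

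The genuine gap is precisely where you flag it: the iterative conditioning on a transitive block. You need that after fixing $u_{\ell_1}=c_{\ell_1},\ldots,u_{\ell_{j}}=c_{\ell_j}$, the equation $u_{\ell_{j+1}}=c_{\ell_{j+1}}$ still imposes a constraint of relative density $\le |T|^{-\epsilon_0}$ on the residual variables. But conditioning on earlier coordinates can substitute away some of the $t_{i,\ell'}$ appearing in $u_{\ell_{j+1}}$, and after substitution you are no longer looking at an automorphic word map based on $w$; you get a map built from $w$ together with inverses and insertions of the conditioned constants, whose underlying word may shorten, or even reduce to one whose value set is a single conjugacy class (e.g.\ a twisted conjugation map $t\mapsto t^{-\alpha}c\,t^{\beta}$), for which Theorem~\ref{main-bdd} and Corollary~\ref{almosts}, as stated, do not directly give a uniform $|T|^{-\epsilon_0}$ saving. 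So the ``$k'$ independent savings per block'' claim is unproved, and it is not a routine verification—it is the heart of the matter.

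The paper sidesteps this entirely: it does not attempt to harvest a saving from every coordinate. Instead it observes that each $u_\ell$ involves at most $l$ of the variables $t_{ij}$ and that each $t_{ij}$ occurs in at most $l$ of the $u_\ell$'s (both bounds come from the word having length $l$); a greedy/pigeonhole argument then produces at least $\lceil k/(l^2-l+1)\rceil$ of the $u_\ell$ whose variable sets are \emph{pairwise disjoint}. These are genuinely independent with no conditioning needed, each gives the factor $|T|^{-\delta}$ from Corollary~\ref{almosts}, and the product already yields $p_{w,G}(g)\le |T^k|^{-\delta/(l^2-l+1)}$. You lose a factor of order $l^2$ in the exponent relative to what your plan aims for, but the statement only requires some $\epsilon(w)>0$, so that loss is harmless. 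If you want to salvage your route, the missing ingredient is a non-degeneracy lemma for the conditional equations within a transitive block, and that lemma would need to cope with the degenerating cases noted above; absent such a lemma, I would switch to the disjointness argument.
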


Note that, if $G$ is any finite group with a chief factor $G_1/G_2 \cong T^k$, then $G$ has
a semisimple quotient $G/C_G(G_1/G_2)$ (the section centralizer) lying between $T^k$ and its automorphism group, to which
Theorem \ref{chief} may be applied.

The next result enables us to pass from any finite group with a large non-abelian composition factor
to a large semisimple quotient $K$ as above, in which the chief factor $T^k$ is almost as large as
$K$.

\begin{prop}
\label{large}
For any $\delta > 0$ there exists $f = f(\delta) > 0$ such that
if $G$ is a finite group with a non-abelian composition factor of order
$\ge f$, then $G$ has a quotient $K$ with a composition factor $T$ of order
$\ge f$ such that $T^k \le K \le \Aut(T^k)$ and $|T^k| \ge |K|^{1-\delta}$.
\end{prop}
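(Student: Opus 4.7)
I argue by induction on $|G|$, reducing to the case where $G$ has a unique minimal normal subgroup of the required form, and then bound the cosocle via a CFSG-based permutation group estimate. If some proper quotient $G/M$ of $G$ already has a non-abelian composition factor of order $\ge f$, I apply the induction hypothesis to $G/M$; any resulting $K$ is also a quotient of $G$. So I may assume that no proper quotient of $G$ has a non-abelian composition factor of order $\ge f$.

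Let $T$ be a non-abelian composition factor of $G$ with $|T| \ge f$. Under the reduction, $T$ must appear as a composition factor of some minimal normal subgroup $N \cong T^k$ of $G$. Moreover $C_G(N) = 1$: otherwise $G/C_G(N)$ would be a proper quotient still containing $T$ as a composition factor, contradicting the reduction. Hence $N$ is the unique minimal normal subgroup of $G$, and $G \le \Aut(N) = \Aut(T) \wr S_k$. Let $\bar P \le S_k$ be the image of $G/N$ under the natural projection; it is transitive since $N$ is minimal normal. Schreier's conjecture (a consequence of CFSG) says $\Out(T)$ is solvable, so all non-abelian composition factors of $G/N$ come from $\bar P$. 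Applied to the proper quotient $G/N$, the reduction therefore implies that $\bar P$ has no non-abelian composition factor of order $\ge f$; in particular no alternating factor $A_m$ with $m \ge m_0$, where $m_0 = O(\log f/\log\log f)$ is the least integer satisfying $m_0!/2 \ge f$.

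To bound $|\bar P|$, decompose it as an iterated wreath product of primitive permutation groups $H_1,\ldots,H_j$ of degrees $n_1,\ldots,n_j$ with $n_1\cdots n_j = k$. By the theorem of Praeger--Saxl, a primitive subgroup of $S_n$ either contains $A_n$ or has order at most $4^n$. Since $H_i$ cannot contain $A_{n_i}$ when $n_i \ge m_0$, in all cases $|H_i| \le \max((m_0-1)!,\,4^{n_i})$, and using $|\bar P| = \prod_i |H_i|^{n_{i+1}\cdots n_j}$ together with the telescoping bound $\sum_i n_in_{i+1}\cdots n_j \le 2k$ gives $|\bar P| \le (\log f)^{Ck}$ for an absolute constant $C$. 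Combined with the CFSG bound $|\Out(T)| = O(\log|T|)$, this yields $|G|/|N| \le (c\log|T|)^{C'k}$ for constants $c, C'$.

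Finally, choose $f = f(\delta)$ large enough that $(c\log f)^{C'} \le f^{\delta/(1-\delta)}$. Then for $|T| \ge f$ we have $|G|/|N| \le |T|^{k\delta/(1-\delta)} = |N|^{\delta/(1-\delta)}$, hence $|N| \ge |G|^{1-\delta}$; the proposition holds with $K := G$. The main obstacle is the permutation-group estimate on $|\bar P|$, where CFSG enters in a nontrivial way both through Praeger--Saxl (or Babai--Cameron--Palfy) on primitive groups and through Schreier's conjecture, which is what guarantees that $\Out(T)$ contributes nothing to the non-abelian composition factors of $G/N$.
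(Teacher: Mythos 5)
Your proof follows essentially the same strategy as the paper: reduce to the semisimple quotient $T^k \le K \le \Aut(T^k)$, bound the degree-$k$ permutation group on top using the absence of large alternating composition factors, combine with $|\Out(T)| = O(\log|T|)$, and choose $f$ large. Two implementation differences are worth noting. You reduce by induction on $|G|$ to the case of a unique minimal normal subgroup (forcing $K = G$), whereas the paper selects a chief factor $G_1/G_2$ of minimal index $|G:G_1|$ among those with a composition factor of order $\ge f$ and takes $K = G/C_G(G_1/G_2)$; the minimality of the index then plays exactly the role your induction plays (ensuring the permutation part has no big composition factor). And you re-derive the permutation-group estimate from an imprimitive wreath decomposition plus Praeger--Saxl, whereas the paper simply cites Lemma~2.2 of Babai--Cameron--P\'alfy (Lemma~\ref{BCP}), which asserts $|P| \le c^{k-1}$ for a degree-$k$ permutation group with no alternating factor $A_i$, $i>c$. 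Since the primitive-decomposition argument is essentially how BCP establish their lemma, you are trading a citation for a self-contained (if slightly more involved) argument.

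There is a computational slip in the re-derivation. Writing $|H_i| \le \max\bigl((m_0-1)!,\,4^{n_i}\bigr)$ and then telescoping does \emph{not} yield $(\log f)^{Ck}$: since $m_0$ is the least integer with $m_0!/2 \ge f$, one has $(m_0-1)! \asymp f$, so the small-degree factors alone could contribute $f^{\Theta(k)}$. You need the degree-sensitive bound $|H_i| \le n_i! \le n_i^{n_i} \le m_0^{n_i}$ when $n_i < m_0$, which gives $|H_i|^{n_{i+1}\cdots n_j} \le m_0^{n_i n_{i+1}\cdots n_j}$ in every case; only then does $\sum_i n_i n_{i+1}\cdots n_j \le 2k$ deliver $|\bar P| \le \max(m_0,4)^{2k} \le (\log f)^{O(k)}$. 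Also, the invocation of Schreier's conjecture is unnecessary: $\bar P$ is a quotient of the proper quotient $G/N$, so your inductive reduction already forces $\bar P$ to have no non-abelian composition factor of order $\ge f$, irrespective of the structure of $H/N$.
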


Results \ref{chief} and \ref{large} easily imply our main result, namely Theorem \ref{newmain}.

Finally, combining Theorem \ref{chief} with a new result of Bors \cite[1.1.2]{B2}
we obtain a result which may be regarded as a probabilistic version of the Odd Order Theorem
of Feit and Thompson \cite{FT}. For a finite group $G$ denote by $\Rad(G)$ the solvable radical
of $G$, namely the maximal solvable normal subgroup of $G$.
\begin{thm}\label{odd}
\begin{enumerate}
\item[(i)] Let $k$ be an odd integer and let $w = x^k$. Then for any $\e > 0$ there
is a number $M = M(k,\e)$ such that, if $G$ is any finite group satisfying $p_{w,G}(g) \ge \e$
for some $g \in G$, then
\[
|G/\Rad(G)| \le M.
\]
\item[(ii)] Let $G$ be a profinite group and suppose that the set of elements of $G$ of finite odd order
has positive Haar measure. Then $G$ has a prosolvable open normal subgroup.
\end{enumerate}
\end{thm}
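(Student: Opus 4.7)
For part (i), the monotonicity $p_{w,G}(g) \le p_{w, G/N}(gN)$ under quotients of finite groups (a uniform $d$-tuple projects to a uniform $d$-tuple) propagates the hypothesis $p_{x^k,G}(g) \ge \e$ to every quotient of $G$. The first step rules out large non-abelian composition factors: with $\delta = 1/2$ in Proposition \ref{large}, if $G$ had a non-abelian composition factor of order $\ge f(1/2)$ we would get a quotient $K$ of $G$ with $T^k \le K \le \Aut(T^k)$, $|T|$ unbounded, and $|T^k| \ge |K|^{1/2}$. Theorem \ref{chief} then gives $p_{x^k,K}(\bar g) \le |T^k|^{-\e_0}$, which combined with the inherited lower bound $\e$ forces $|T|$ (and hence $|K|$) to be bounded in terms of $k$ and $\e$. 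Consequently every non-abelian composition factor of $G$ has order at most some $C = C(k,\e)$. To upgrade this to an actual bound on $|G/\Rad(G)|$, allowing for many small non-abelian composition factors, I would invoke the new result of Bors \cite[1.1.2]{B2}, tailored to odd power words $x^k$---the oddness being what gives a probabilistic Feit--Thompson flavour.

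For part (ii), I would reduce to (i) via $\sigma$-additivity. Writing $S_k = \{g \in G : g^k = 1\}$, the set of elements of finite odd order is $\bigcup_{k \text{ odd}} S_k$, so positive Haar measure forces $\mu(S_{k_0}) \ge \e > 0$ for some odd $k_0$. For every open normal subgroup $N \trianglelefteq G$, the set $S_{k_0}$ is contained in the preimage of $\{h \in G/N : h^{k_0} = 1\}$, whence $p_{x^{k_0}, G/N}(1) \ge \e$. Part (i) then yields $|G/N \,/\, \Rad(G/N)| \le M = M(k_0, \e)$.

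Let $R_N \trianglelefteq G$ be the preimage of $\Rad(G/N)$, an open normal subgroup of index $\le M$, and note that $R_{N'} \subseteq R_N$ whenever $N' \subseteq N$ (the image of a solvable group is solvable). Pick $N^*$ maximising $|G/R_N| \le M$; for any open normal $N \trianglelefteq G$, the monotonicity applied to $N \cap N^* \subseteq N^*$ gives $R_{N \cap N^*} \subseteq R_{N^*}$, so $|G/R_{N \cap N^*}| \ge |G/R_{N^*}|$, and by maximality $R_{N \cap N^*} = R_{N^*}$. Hence $R_{N^*} = R_{N \cap N^*} \subseteq R_N$ for every $N$, so $K := \bigcap_N R_N = R_{N^*}$ is open normal in $G$ of index $\le M$. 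For any open normal $U \trianglelefteq K$, profiniteness supplies an open normal $N \trianglelefteq G$ with $N \subseteq U$, and $K/N \hookrightarrow \Rad(G/N)$ is solvable, so its quotient $K/U$ is solvable; thus $K$ is prosolvable.

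The main obstacle is the bounded-composition-factor case of part (i): Theorem \ref{chief} gives nothing there, so the argument genuinely depends on Bors's new theorem, whose restriction to odd $k$ is precisely what enables the probabilistic analogue of the Odd Order Theorem.
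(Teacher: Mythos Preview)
Your proposal is correct and follows essentially the same route as the paper: for (i), bound the sizes of non-abelian composition factors via Theorem~\ref{chief} applied to a quotient furnished by Proposition~\ref{large}, then bound their multiplicities via Bors \cite[1.1.2]{B2}, and finish (as the paper does explicitly, and you should too) via the embedding $G/\Rad(G)\hookrightarrow \Aut(\Soc(G/\Rad(G)))$; for (ii), use $\sigma$-additivity to fix an odd $k_0$ and apply (i) to all finite quotients. Your stabilization argument for the preimages $R_N$ in (ii) is a correct and welcome elaboration of the paper's one-line ``the required conclusion now follows from part (i)''.
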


Therefore profinite groups as in part (ii) above are virtually prosolvable.
Part (i) above shows that, for odd $k$, if the probability that $g^k=1$ in $G$ is
bounded away from zero, then $G$ has a solvable normal subgroup of bounded index.

However, it is not true that if the probability
that $g \in G$ has odd order is at least $\e >0$ (or even at least $1-\e$), then $G$
is solvable-by-bounded. Indeed, simple groups of Lie type in characteristic $2$
provide counterexamples (since most of their elements are semisimple, hence of odd order).

A result similar to part (i) of Theorem \ref{odd} with $w = [x_1, \ldots , x_d]$, a left normed
commutator, also follows by combining Theorem 1.1.2 of Bors \cite{B2} with Theorem \ref{chief}
above.

Let us now discuss the strategy of the proof of Theorem~\ref{main-bdd}, which is the
basis of most of our other results. The main idea is, roughly, to convert it to a problem in algebraic geometry.
If, to simplify slightly, $T = \uG(\F_q)$ for some algebraic group $\uG$ of bounded rank,
the parameter $q$ goes to infinity as $|T|\to \infty$.  Any non-trivial word $w$ defines a non-constant morphism $\uG^d\to \uG$.
The fibers are therefore of dimension $\le d\dim \uG-1$, and as $q$ goes to infinity and $g$ varies, the ``complexity'' of the fibers
remains bounded, so we can deduce that
$$|w^{-1}(g)| = O(|\uG(\F_q)|^{d-1/\dim \uG})$$
from standard point counting results for varieties over finite fields.  This idea is not new to this paper (see, e.g., \cite{DPSS, LS2}).
However, there are technical difficulties in implementing it when we must take outer automorphisms of $T$ into account.
For field automorphisms, when $\F_q$ is large but its characteristic $p$ is small, this requires a new idea, namely
finding big gaps between consecutive powers of Frobenius appearing in any specified $d$-tuple of cosets of $T$ (see the proof of Theorem~\ref{main-bdd}, below).
The Suzuki and Ree groups also pose a  technical challenge, since we are no longer counting points on varieties over finite fields but rather taking
fixed points of maps which are square roots of ordinary Frobenius maps.

In \S2, we develop a simple formalism for making precise the idea of bounded complexity mentioned above.
In \S3, we present upper bounds for certain point counting problems.  There are two main variants, one aimed at proving
estimates which are uniform in characteristic and one aimed at dealing with the special difficulties of the Suzuki-Ree case.
In \S4 we prove Theorem \ref{main-bdd} and extend it to all almost simple groups.

Finally, in \S5 we study finite semisimple groups and finite groups in general.
This is where results \ref{chief}, \ref{large}, \ref{newmain} and \ref{odd}
are established.
If $T^k \le G \le \Aut(T^k) \cong \Aut(T) \wr S_k$ as in Theorem \ref{chief}, then $G$
induces a transitive permutation group $P$ on the $k$ copies of $T$, and tools from the theory of permutation
groups  become relevant. Theorem \ref{chief} is proved, roughly, by using the case $k=1$, which has already been established,
and by finding a large set of independent equations induced by the given word equation on $G$.

We then prove Proposition \ref{large} by bounding the order of $P$ above, and deduce
a finitary version---Theorem \ref{mainfinite}---of Theorem 1.1, which readily
implies it.

\bigskip

\section{Degree Bounds}

\begin{defn}
Let $R$ be a commutative ring.   By a \emph{generated commutative algebra} over $R$ (GCA for short),
we mean a pair $(A,S)$ consisting of a commutative $R$-algebra $A$ and a finite set $S$ of generators of $A$ over $R$.
An isomorphism $(A,S)\to (B,T)$ of GCAs is an isomorphism $A\to B$ of $R$-algebras which maps $S$ onto $T$.
\end{defn}

Thus, every GCA is isomorphic to one of the form
$$(R[x_1,\ldots,x_N]/I,\{\bar x_1,\ldots,\bar x_N\}).$$
For $a\in A$ we define $\deg_S a$ to be the minimum integer $m$ such that $a$ can be realized as a degree $m$ polynomial in the elements $S$ with coefficients in $R$.
If $(A,S)$ and $(B,T)$ are GCAs over $R$ and $\phi\colon A\to B$ is an $R$-algebra homomorphism, we define
$$\deg\phi = \max_{s\in S} \deg_T \phi(s).$$
The following lemma is obvious:

\begin{lem}
\label{defs}
With notation as above,
$$\deg_T \phi(a)\le \deg \phi \deg_S a.$$
If $(C,U)$ is also a GCA over $R$, and $\psi\colon B\to C$ is an $R$-algebra homomorphism,
$$\deg \psi\circ \phi\le \deg\phi\deg\psi.$$
If $R'$ is a commutative $R$-algebra and $(A',S')$ denotes the base change of $(A,S)$ to $R'$ (i.e., $A' = A\otimes_R R'$, and $S' = \{s\otimes 1\mid s\in S\}$),
then
$$\deg_{S'} a\otimes 1\le \deg_S a.$$
\end{lem}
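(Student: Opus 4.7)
The plan is to verify each of the three inequalities directly from the definitions of $\deg_S$ and $\deg\phi$, since all three are straightforward polynomial-substitution arguments.

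For the first inequality, I would start with an element $a \in A$ realized as $a = P(s_1, \ldots, s_k)$ where $s_i \in S$ and $P \in R[y_1,\ldots,y_k]$ is a polynomial of degree $m = \deg_S a$. Applying the $R$-algebra homomorphism $\phi$ commutes with polynomial expressions, so $\phi(a) = P(\phi(s_1),\ldots,\phi(s_k))$. Each $\phi(s_i)$ is, by definition of $\deg\phi$, expressible as a polynomial in $T$ of degree at most $\deg\phi$, say $\phi(s_i) = Q_i(t_{i,1},\ldots,t_{i,n_i})$ with $t_{i,j} \in T$ and $\deg Q_i \le \deg\phi$. Substituting these into $P$ yields a polynomial expression for $\phi(a)$ in elements of $T$ whose total degree is at most $(\deg P)(\max_i \deg Q_i) \le (\deg_S a)(\deg\phi)$, which is the claimed bound.

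The second inequality is then immediate: for any $s \in S$, apply the first inequality with $a$ replaced by $\phi(s) \in B$ and $\phi$ replaced by $\psi$ to obtain
\[
\deg_U \psi(\phi(s)) \le (\deg\psi)(\deg_T \phi(s)) \le (\deg\psi)(\deg\phi),
\]
where the last step uses $\deg_T \phi(s) \le \deg\phi$ from the definition of $\deg\phi$. Taking the maximum over $s \in S$ gives $\deg(\psi\circ\phi) \le \deg\phi\deg\psi$.

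For the base-change inequality, if $a = P(s_1,\ldots,s_k)$ with $P \in R[y_1,\ldots,y_k]$ of degree $m = \deg_S a$, then tensoring with $R'$ gives the polynomial $P' \in R'[y_1,\ldots,y_k]$ obtained by mapping coefficients through $R \to R'$, which has degree at most $m$. Then $a \otimes 1 = P'(s_1 \otimes 1,\ldots,s_k \otimes 1)$ exhibits $a \otimes 1$ as a polynomial of degree at most $m$ in elements of $S'$, proving $\deg_{S'}(a\otimes 1) \le \deg_S a$. There is no real obstacle anywhere: the entire lemma just records that $\deg_S$ and $\deg\phi$ behave like ordinary polynomial degrees under substitution, composition, and scalar extension, which is why the authors call it obvious.
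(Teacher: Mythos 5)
Your argument is correct: all three inequalities follow exactly as you say, by substituting polynomial expressions for generators and noting that degrees multiply under composition and do not increase under base change. The paper simply declares this lemma obvious without giving a proof, and your direct verification is precisely the intended routine argument, so there is nothing to add.
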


The following lemma, asserting that every $q$-Frobenius map has degree at most $q$, is likewise immediate from the definitions:

\begin{lem}
\label{Frob}
If $(A,S)$ is a GCA over $R = \F_q$, and  $\phi\colon A\to A$ denote the $q$-Frobenius map,
then $\deg \phi\le q$.
\end{lem}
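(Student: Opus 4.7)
The plan is simply to unpack the definition of $\deg\phi$. Since $\phi\colon A\to A$ is an endomorphism of the GCA $(A,S)$, the definition
\[
\deg\phi = \max_{s\in S}\deg_S\phi(s)
\]
reduces the task to bounding $\deg_S\phi(s)$ by $q$ for each $s\in S$. By definition of the $q$-Frobenius, $\phi(s) = s^q$, which is tautologically realized as a polynomial of degree $q$ in the generating set $S$ — namely, the monomial $s^q$ itself, with coefficient $1\in\F_q$. Hence $\deg_S\phi(s)\le q$, and taking the maximum over $s\in S$ yields $\deg\phi\le q$.

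There is no genuine obstacle here; the content is purely combinatorial bookkeeping. The only background fact needed is that the $q$-Frobenius actually \emph{is} an $\F_q$-algebra endomorphism — i.e., that it is additive in characteristic $p$ (where $q$ is a power of $p$) and fixes $\F_q$ pointwise because every $a\in\F_q$ satisfies $a^q=a$ — which is what allows us to interpret $\phi$ as a morphism of GCAs over $\F_q$ in the first place. Once this is granted, the bound is immediate from Definition \ref{defs} and the obvious observation that a $q$-th power of a generator is a degree-$q$ monomial.
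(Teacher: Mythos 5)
Your proof is correct and matches the paper's intent exactly: the paper simply states the lemma is ``immediate from the definitions,'' and your unpacking --- $\phi(s) = s^q$ is a degree-$q$ monomial in the generators, so $\deg_S\phi(s)\le q$ for every $s\in S$, whence $\deg\phi\le q$ --- is precisely the argument being alluded to. (One tiny slip: the formula $\deg\phi=\max_{s\in S}\deg_S\phi(s)$ is a definition given in the text preceding Lemma \ref{defs}, not part of Lemma \ref{defs} itself, but this does not affect the reasoning.)
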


We say generating sets $S$ and $T$ of $A$ are \emph{equivalent} if  they generate the same $R$-submodule of $A$.
In this case, $\deg_S a= \deg_{T} a$ for all $a\in A$.  Thus, every surjective $R$-algebra homomorphism $\phi\colon R[x_1,\ldots,x_N]\to A$ determines a well-defined
equivalence class of generating sets in $A$ represented by $\{\phi(x_1),\ldots,\phi(x_N)\}$.  Geometrically, a closed immersion of $\Spec A$ into an affine space over $R$
determines a generating set, and two closed immersions which are the same up to an affine transformation determine equivalent generating sets.

\begin{lem}
\label{Adjoint}
Let $R$ be a field, $\uG=\Spec A$ an adjoint semisimple algebraic group over $R$, $\alpha$ an automorphism of $\uG$ as algebraic group over $R$, and $\rho\colon \uG\to \A^{\dim \uG}$ the adjoint representation.  Up to equivalence, $\rho$ and $\rho\circ\alpha$ determine the same equivalence class of generating sets of $A$.
\end{lem}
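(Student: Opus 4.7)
The plan is to deduce this from the functoriality of the adjoint representation. Write $\g = \mathrm{Lie}(\uG)$ and $n = \dim \uG$. Since $\uG$ is adjoint, $\rho = \mathrm{Ad}$ realizes $\uG$ as a closed subgroup of $\GL(\g) \subset \A^{n^2}$, and the generating set $S \subset A$ associated to $\rho$ consists (up to equivalence) of the $n^2$ matrix-coefficient functions $a_{ij}(g) = \rho(g)_{ij}$ with respect to some fixed $R$-basis of $\g$. The same applies to $\rho \circ \alpha$, producing a generating set $T = \{g \mapsto \rho(\alpha(g))_{ij}\}$.

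First, I would differentiate $\alpha$ at the identity to obtain an $R$-linear automorphism $M := d\alpha$ of $\g$, which is defined over $R$ because $\alpha$ is. Differentiating the axiom $\alpha(ghg^{-1}) = \alpha(g)\alpha(h)\alpha(g)^{-1}$ in the variable $h$ at $h = e$ yields the functoriality identity
\[
\rho(\alpha(g)) \;=\; M\,\rho(g)\,M^{-1}
\]
as morphisms $\uG \to \End(\g)$.

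Second, since the entries of $MXM^{-1}$ are $R$-linear combinations of the entries of $X$ with coefficients depending only on $M \in \GL_n(R)$, applying this pointwise at $X = \rho(g)$ shows that each element of $T$ lies in the $R$-submodule of $A$ spanned by $S$; swapping the roles of $M$ and $M^{-1}$ gives the reverse inclusion, and so $S$ and $T$ span the same $R$-submodule of $A$, i.e., they are equivalent. Equivalently, the closed immersions $\rho$ and $\rho \circ \alpha$ into $\A^{n^2}$ differ by the linear (hence affine) transformation $X \mapsto MXM^{-1}$, so the conclusion follows directly from the convention stated before the lemma. I do not anticipate any substantive obstacle: the only real ingredient is the identity $\mathrm{Ad} \circ \alpha = \mathrm{Int}(d\alpha) \circ \mathrm{Ad}$, which is just the derivative of the group-automorphism axiom.
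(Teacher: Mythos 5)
Your proof is correct and takes essentially the same route as the paper: the paper's one-line argument asserts that $\rho$ and $\rho\circ\alpha$ are conjugate representations and concludes from the convention that closed immersions differing by an affine (here linear) transformation give equivalent generating sets, and your identity $\rho(\alpha(g)) = (d\alpha)\,\rho(g)\,(d\alpha)^{-1}$ is exactly that conjugacy, with the conjugating matrix made explicit as $d\alpha$. No gap to report.
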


\begin{proof}
The homomorphisms $\rho$ and $\rho\circ\alpha$ give the same representation, which means they are conjugate, so they define the same equivalence class of generating sets.
\end{proof}

In general, given an adjoint semisimple algebraic group $\cG$ over a field $\F_q$, we regard its coordinate ring $A$
as a GCA endowed with a generating set $S$ belonging to this equivalence class.

\begin{prop}
\label{deg-bound}
Let $\uG := \Spec A$ be an adjoint semisimple group over $\F_p$ with root system $\Phi$.
Let $F_p$ denote the $p$-Frobenius endomorphism of $\uG$.
Let $\beta_1,\ldots,\beta_l$ be automorphisms of $\uG$ defined over $\F_p$, and let
$n_1, \ldots , n_l \in \{ 1, \ldots , d \}$.
Then, for $\uG$ as above, the morphism $\uG^d\to \uG$ given by
\begin{equation}
\label{gm}
(g_1,\ldots,g_d)\mapsto F_q^{j_1}\beta_1(g_{n_1})^{\pm1}\cdots F_q^{j_l}\beta_l(g_{n_l})^{\pm1}
\end{equation}
has degree $O(p^j)$, where $j=\max_i j_i$, and the coordinate rings of $\uG$ and $\uG^d$ have generating sets $S$ and $S\coprod\cdots\coprod S$ respectively.  (The implicit constant depends on $\Phi$ and $l$ but not on $p$ or $j$.)
\end{prop}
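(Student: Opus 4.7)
The plan is to decompose the given morphism into elementary pieces whose degrees are already controlled by the preceding lemmas, and then assemble the bound via Lemma~\ref{defs}. I would write the $i$-th factor of the product as
\[
(g_1,\ldots,g_d)\mapsto F_p^{j_i}\bigl(\beta_i(g_{n_i})^{\pm 1}\bigr) = F_p^{j_i}\circ \iota^{\pm 1}\circ \beta_i\circ \pi_{n_i},
\]
where $\pi_{n_i}\colon\uG^d\to\uG$ is the projection to the $n_i$-th factor, $\iota\colon\uG\to\uG$ is inversion (present only when the exponent is $-1$), and $F_p^{j_i}$ is the $j_i$-fold iterate of the $p$-Frobenius on $\uG$.

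I would then bound each piece, uniformly in $p$. The projection $\pi_{n_i}$ has degree $1$ by construction of the generating set $S\sqcup\cdots\sqcup S$ on $\uG^d$. The automorphism $\beta_i$ has degree $1$ by Lemma~\ref{Adjoint}, since the generating sets induced by $\rho$ and $\rho\circ \beta_i$ span the same $R$-submodule, so $\beta_i^*$ carries each $s\in S$ to an $R$-linear combination of elements of $S$. The inversion $\iota$ has degree $O(1)$, with implicit constant depending only on $\Phi$, because the adjoint Chevalley group scheme is defined over $\Z$ and its inversion morphism is given by universal polynomial formulas whose degree depends on $\Phi$ alone. Finally, Lemma~\ref{Frob} gives $\deg F_p\le p$, and iterating Lemma~\ref{defs} yields $\deg F_p^{j_i}\le p^{j_i}\le p^j$. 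Composing these four bounds via Lemma~\ref{defs}, the $i$-th factor map $f_i\colon \uG^d\to\uG$ has degree $O(p^j)$, with implicit constant depending only on $\Phi$.

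It remains to bound the degree of the $l$-fold multiplication. The morphism in question equals $m\circ(f_1,\ldots,f_l)$, where $m\colon\uG^l\to\uG$ is iterated group multiplication. Again using the universal Chevalley structure, $m$ has degree bounded in terms of $\Phi$ and $l$ alone (it is the $(l-1)$-fold composition of the binary multiplication, each factor of which has $\Phi$-bounded degree). The tuple map $(f_1,\ldots,f_l)\colon\uG^d\to\uG^l$ has degree $\max_i\deg f_i = O(p^j)$, since the generating set on $\uG^l$ is the disjoint union of $l$ copies of $S$. One last application of Lemma~\ref{defs} gives the desired bound $O(p^j)$.

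The main obstacle in this plan is keeping every implicit constant independent of $p$. The argument depends critically on the fact that, for an adjoint semisimple group scheme with root system $\Phi$, multiplication, inversion, and the action of $\F_p$-rational automorphisms on the coordinate ring are encoded by polynomial formulas whose complexity is controlled by $\Phi$ alone; Lemma~\ref{Adjoint} packages this fact for automorphisms, and the integral Chevalley construction packages it for the group operations. Once those $p$-free bounds are in place, the Frobenius powers are the only source of $p$-dependence, contributing exactly the required factor $p^j$.
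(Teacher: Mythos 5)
Your proposal is correct and follows essentially the same route as the paper: reduce to the individual factors using the bounded-degree (independent of $p$) multiplication and inversion morphisms together with Lemma~\ref{defs}, then bound each factor $g\mapsto F_p^{j_i}\beta_i(g)^{\pm1}$ via Lemma~\ref{Frob} for the Frobenius powers and Lemma~\ref{Adjoint} for the automorphisms $\beta_i$. Your write-up simply makes explicit the details (projections of degree $1$, degree-$1$ action of $\beta_i^*$ on the span of $S$, and the $\Phi$-bounded degrees of the Chevalley group operations) that the paper's proof leaves implicit.
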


\begin{proof}
As the multiplication and inversion morphisms on $\cG$ have degrees which are bounded independent of $q$,  by Lemma~\ref{defs} and induction on $l$, it suffices to prove the claim for
$g\mapsto F_p^j \beta(g)$.
The proposition follows from Lemma~\ref{defs}, Lemma~\ref{Frob}, and Lemma~\ref{Adjoint}.

\end{proof}

\bigskip

\section{Point Bounds}

It is well known that a degree $s$ hypersurface in $\A^N$ has at most $s q^{N-1}$ points over $\F_q$.
We present several variants of this observation.

\begin{prop}
\label{intersect}
Let $p$ be prime, $q$ a power of $p$, $\uX\subset \A^N$ over $\bar\F_p$ be the closed subscheme defined by the ideal $(f_1,\ldots,f_n)$,
where $f_i\in \bar\F_p[x_1,\ldots,x_N]$ have degrees $d_1,\ldots,d_n$.
If
$$d_1\cdots d_n \le Aq,$$
then
$$|\uX(\bar\F_p)\cap \A^n(\F_q)| \le (2A+1)^N d_1\cdots d_n q^{\dim \uX}.$$
\end{prop}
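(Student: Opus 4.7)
The plan is to prove Proposition~\ref{intersect} by induction on $N$, using a coordinate slicing argument. For the base case $N=1$, the subscheme $\underline{X} \subseteq \A^1$ is either all of $\A^1$ (when $n=0$, the empty product being $1$) or is zero-dimensional; in the latter case it is cut out by a nonzero polynomial of degree at most $d_i$ for some $i$, giving at most $d_1\cdots d_n$ points, well within the desired bound $(2A+1)\,d_1\cdots d_n$.

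For the inductive step, I would partition $\F_q$ according to whether slicing by the hyperplane $V(x_N - c)$ drops the dimension. Call $c \in \F_q$ \emph{good} if $\dim \underline{X}_c \le e-1$, where $e = \dim \underline{X}$ and $\underline{X}_c := \underline{X} \cap V(x_N-c)$, regarded as a subscheme of $\A^{N-1}$ cut out by the polynomials $f_i(x_1,\ldots,x_{N-1},c)$ of degrees at most $d_i$; call $c$ \emph{bad} otherwise. For each good $c$ the inductive hypothesis yields
$$\bigl|\underline{X}_c(\bar\F_p) \cap \A^{N-1}(\F_q)\bigr| \le (2A+1)^{N-1}\, d_1 \cdots d_n\, q^{e-1},$$
so the total contribution from good slices is at most $(2A+1)^{N-1}\, d_1\cdots d_n\, q^e$. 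The bad values $c$ are exactly those for which $V(x_N-c)$ contains a top-dimensional component of $\underline{X}$; by a Bezout-type count their number is bounded by the number of top-dimensional irreducible components of $\underline{X}$, which is $O(d_1\cdots d_n)$. Each bad slice contributes trivially at most $q^e$ points, adding a term on the order of $d_1\cdots d_n\, q^e$. Combining, the total is at most $\bigl((2A+1)^{N-1} + O(1)\bigr)\, d_1\cdots d_n\, q^e \le (2A+1)^N\, d_1\cdots d_n\, q^e$, as desired.

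The main obstacle I anticipate is handling the degenerate situation in which slicing along $x_N$ simply cannot drop the dimension for enough values of $c$---for instance, when $\underline{X}$ is a union of hyperplanes all parallel to $V(x_N)$. One must then replace $x_N$ by a suitable $\F_q$-linear combination of $x_1,\ldots,x_N$ so that the resulting coordinate function is non-constant on every top-dimensional component of $\underline{X}$. The hypothesis $d_1\cdots d_n \le Aq$ enters precisely here: it bounds the number of irreducible components of $\underline{X}$, hence the number of ``bad directions'' to be avoided, guaranteeing that such an $\F_q$-linear combination exists. Each such coordinate adjustment contributes a factor of at most $2A+1$ to the count, which accumulates over the $N$-fold induction to produce the $(2A+1)^N$ factor in the final bound.
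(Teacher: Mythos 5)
Your slicing strategy is genuinely different from the paper's, which counts incidences between points of $\uX(\F_q)$ and \emph{all} $\F_q$-rational affine hyperplanes (not a single parallel pencil) and runs a double induction, outer on $\dim\uX$ and inner on $N$. As written, however, your sketch contains substantive errors. The claim that a bad slice contributes ``trivially at most $q^e$ points'' is false: $\uX_c$ is a closed subscheme of $\A^{N-1}$ of dimension $e$ cut out by polynomials whose degree product is still at most $D := d_1\cdots d_n$, and the best a priori bound (the inductive hypothesis for $N-1$) is $(2A+1)^{N-1}Dq^e$, which for $D$ close to $Aq$ vastly exceeds $q^e$. Concretely, for $\uX = V\bigl(x_N,\, g(x_1,\ldots,x_{N-1})\bigr)$ with $\deg g = d$ large, one has $e = N-2$, the only nonempty slice is the bad one $c = 0$, and it carries about $dq^{N-2}$ points. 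Second, the hypothesis $D \le Aq$ does \emph{not} guarantee a linear form nonconstant on every top-dimensional component: since $A$ is allowed to exceed $1$, the ``bad direction'' subspaces may cover all of $\F_q^N$. Already for $\uX$ the union of the $q+1$ $\F_q$-rational lines through the origin in $\A^2$ (so $D = q+1 \le Aq$ with $A$ slightly above $1$), every nonzero linear form is constant on one of the lines. Notice too that if such a form always existed there would be no bad slices, and your induction would produce the sharper constant $(2A+1)^{N-1}$ with no growth, so the accounting behind your claimed $(2A+1)^N$ does not cohere. Finally, your $N=1$ base case does not cover $\dim\uX = 0$ with $N\ge 2$: there every rational point creates a bad value of $c$ and slicing gains nothing; one needs the B\'ezout bound directly, which is exactly the paper's $\dim\uX = 0$ base case.

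The strategy can be repaired within your framework, but with different estimates. After treating $\dim\uX = 0$ by B\'ezout, choose $\ell$ only to \emph{minimize} (not annihilate) the number of top-dimensional components on which it is constant: averaging over nonzero $\ell\in\F_q^N$, and using that $\{\ell : \ell|_{Z_i}\text{ is constant}\}$ is a proper subspace once $\dim Z_i \ge 1$, yields some $\ell$ constant on at most $\lfloor A\rfloor$ components, hence at most $\lfloor A\rfloor$ bad values of $c$. Bounding each bad slice by the inductive hypothesis $(2A+1)^{N-1}Dq^e$ and the good slices by $q\cdot(2A+1)^{N-1}Dq^{e-1}$ gives a total at most $(A+1)(2A+1)^{N-1}Dq^e \le (2A+1)^N Dq^e$. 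This single-induction argument is a legitimate and arguably cleaner alternative to the paper's incidence count, but it is not the argument you wrote, and both the ``$q^e$'' claim and the ``good direction always exists'' claim must be abandoned to make it work.
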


\begin{proof}
Let
$$\phi_i(x_0,\ldots,x_N) := x_0^{d_i} f_i(x_1/x_0,\ldots,x_N/x_0).$$
The intersection of the hypersurfaces in $\P^N$ defined by the $\phi_i$ defines a closed subscheme $\bar{\uX}$ of $\P^N$,
and the intersection of this subscheme with $\A^N$ is $\uX$.  Therefore, the number of irreducible components of
$\uX$ over $\bar\F_p$ is less than or equal to the number of components in $\bar{\uX}$, which by a suitable version of B\'ezout's theorem (see \cite[12.3.1]{Fulton}), implies that the number of such components is less than or equal to $d_1\cdots d_n$.
This implies the theorem for $\dim \uX=0$.

We proceed by double induction, first on $\dim \uX$ and then on $N$.  The base case for the inner induction is $N=\dim \uX$, and the proposition holds in this case trivially.  For the induction step, we count pairs $(x,H)$ consisting of a point $x\in \uX(\bar\F_p)\cap \A^n(\F_q)$
and $H$ an $\F_q$-rational hyperplane in $\A^N$ containing $x$ (but not necessarily $0$).  The number of such pairs for a given $x\in \uX(\bar\F_p)\cap \A^n(\F_q)$ is $|\P^{N-1}(\F_q)|$, so the total number is
$$(1+q+\cdots+q^{N-1}) |\uX(\bar\F_p)\cap \A^n(\F_q)|.$$

We say that $H$ is \emph{bad} for $\uX$ if it contains an irreducible component of $\uX$ of dimension $\dim \uX$.
The number of hyperplanes containing a given top-dimensional component is at most
$$1+q+\cdots+q^{N-\dim \uX-1},$$
so the number of bad hyperplanes is less than
$$2q^{N-\dim\uX-1} d_1\cdots d_n.$$
By the induction hypothesis for $N$, the number of pairs $(x,H)$ where $H$ is bad is less than
$$  2 q^{N-\dim\uX-1}d_1\cdots d_n (2A+1)^{N-1}d_1\cdots d_n q^{\dim\uX} \le 2A(2A+1)^{N-1} q^N d_1\cdots d_n.$$
By the induction hypothesis for $\dim \uX$, the number of pairs $(x,H)$ where $H$ is good is less than
$$(q+q^2+\cdots+q^N)  (2A+1)^{N-1}d_1\cdots d_n q^{\dim\uX-1}.$$
Thus, the total number of pairs is less than
$$(q+q^2+\cdots+q^N) (2A+1)^N d_1\cdots d_n q^{\dim\uX},$$
so
$$|\uX(\bar\F_p)\cap \A^n(\F_q)| \le (2A+1)^N d_1\cdots d_n q^{\dim\uX},$$
and the proposition holds by induction.
\end{proof}

\begin{prop}
\label{A-S}
Let $(A,S)$ be a GCA over $\Z$.
Let $(A_q,S_q)$ be a GCA over $\F_q$ such that there exists an isomorphism $\iota\colon A\otimes\bar\F_q\to A_q\otimes_{\F_q}\bar\F_q$ with respect to which
$S$ and $S_q$ are equivalent, and let $\uY = \Spec A_q$.   If $f\in A\otimes \bar\F_q$ is not a zero divisor, then
$$|\{y\in \uY(\F_q)\mid f(y) = 0\}| = O((\deg_S f) q^{\dim \uY-1}).$$
\end{prop}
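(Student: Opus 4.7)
The plan is to transport $f$ to a polynomial function on $\uY$, apply a point-counting inequality, and exploit the bounded complexity of the ambient presentation. Let $N = |S_q|$, and view $\uY$ as a closed subscheme of $\A^N_{\F_q}$ via $S_q$. Set $\tilde f := \iota(f) \in A_q \otimes_{\F_q} \bar\F_q$. Because $\iota$ maps $S \otimes 1$ to a set equivalent to $S_q \otimes 1$, each element of $\iota(S \otimes 1)$ is an $\bar\F_q$-linear combination of $S_q \otimes 1$ and conversely; the argument of Lemma~\ref{defs} then gives $\deg_{S_q \otimes 1} \tilde f \le \deg_S f$. Lift $\tilde f$ to a polynomial $F \in \bar\F_q[y_1, \ldots, y_N]$ of degree at most $\deg_S f$ whose restriction to $\uY_{\bar\F_q}$ is $\tilde f$.

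Next, every $y \in \uY(\F_q)$ at which $f$ vanishes lies in the closed subscheme $\uY_{\bar\F_q} \cap V(F) \subset \A^N_{\bar\F_q}$. Since $f$ is not a zero divisor, $\tilde f$ vanishes on no irreducible component of $\uY_{\bar\F_q}$, so Krull's Hauptidealsatz gives $\dim (\uY_{\bar\F_q} \cap V(F)) \le \dim \uY - 1$. To apply Proposition~\ref{intersect} one needs explicit defining polynomials of bounded degree. The key observation is that the fixed presentation $A \cong \Z[x_1, \ldots, x_N]/(g_1, \ldots, g_m)$ with $\deg g_j = e_j$ pulls back under the equivalence of generating sets (implemented by a linear change of coordinates on $\A^N_{\bar\F_q}$, which preserves degree) to a system of defining polynomials of degrees $e_1, \ldots, e_m$ for $\uY_{\bar\F_q}$.

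With these defining equations together with $F$ in hand, the case $\deg_S f \le q$ of the proposition follows from Proposition~\ref{intersect} with its hypothesis parameter chosen to depend only on the $e_j$'s (hence only on the fixed GCA $(A, S)$): the resulting bound is $O((\deg_S f)\, q^{\dim \uY - 1})$ with implicit constant independent of $q$ and $f$. The case $\deg_S f > q$ is easier, since the claimed bound already majorizes $q^{\dim \uY}$; the crude estimate $|\uY(\F_q)| = O(q^{\dim \uY})$, obtained by applying Proposition~\ref{intersect} to $\uY$ alone, then suffices. The main obstacle is the bookkeeping in the middle step: one must check that the equivalence $\iota(S) \sim S_q$ really does yield defining equations for $\uY_{\bar\F_q} \subset \A^N_{\bar\F_q}$ whose degrees depend only on $(A, S)$, which requires tracking how the induced linear change of coordinates interacts with the defining ideal and, if the two generating sets do not have equal cardinality, adjoining the appropriate linear relations among the generators.
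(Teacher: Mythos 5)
Your proposal is correct and follows essentially the same route as the paper's own proof: transport the presentation of $(A,S)$ over $\Z$ through the linear equivalence of generating sets to obtain bounded-degree defining equations for $\uY_{\bar\F_q}$ in $\A^N$, adjoin $f$ (a non-zero-divisor, so the dimension drops by one), and invoke Proposition~\ref{intersect}. Your explicit treatment of the case $\deg_S f \gg q$ and of the dimension drop just makes precise steps the paper leaves implicit, so no further comparison is needed.
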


Here the implicit constant depends on $(A,S)$ but not on $q$.

\begin{proof}
Let $S = \{s_1,\ldots,s_M\}$ and $S_q = \{t_1,\ldots,t_N\}$,
and consider the homomorphisms $\phi\colon \Z[x_1,\ldots,x_N]\to A$ and $\phi_q\colon \F_q[y_1,\ldots,y_N]\to A_q$ mapping $x_i$ to $s_i\otimes 1$
and $y_i$ to $t_i\otimes 1$ respectively.
The equivalence of generating sets guarantees the existence of a diagram
$$\xymatrix{\bar\F_q[x_1,\ldots,x_N]\ar[r]\ar[d]_{\phi\otimes 1}&\bar\F_q[y_1,\ldots,y_N]\ar[d]^{\phi_q\otimes 1}\\ A\otimes\bar\F_q\ar[r]^\iota&A_q\otimes_{\F_q}\bar\F_q}$$
where the top arrow maps each $x_i$ to a linear combination of the $y_j$.
If $f_1,\ldots,f_n$ is a generating set for $\ker\phi$.
In particular, $A_q\otimes_{\F_q}\bar \F_q$ is the quotient of $\bar\F_q[y_1,\ldots,y_N]$
by polynomials $\iota(f_1\otimes n),\ldots,\iota(f_k\otimes 1)$ of degrees at most $\deg f_1,\ldots,\deg f_n$
respectively.

The proposition now follows  by applying Proposition~\ref{intersect} to
$$\uX = \Spec \bar\F_p[x_1,\ldots,x_N]/(\iota(f_1\otimes 1),\ldots,\iota(f_n\otimes 1),f).$$
\end{proof}

\begin{prop}
\label{SR-setup}
Let $q=p^f$,  $(A,S)$  a GCA defined over $\F_p$, and
$\uX = \Spec A$.  For all $a\in A$, we let $\uX_a$ denote $\Spec A/(a)$ regarded as a closed subscheme of $\uX$.
Let $k$ be a positive integer prime to $f$, $F_q\colon \uX\to \uX$ the $q$-Frobenius morphism,
and $F\colon \uX\to \uX$ an endomorphism such that $F^k = F_q$.   If $\dim \uX_a < \dim \uX$
and $t$ is a positive integer, then
$$|\uX_a(\bar\F_p)\cap \uX(\bar\F_p)^{F^t}| \le O((\deg_S a) p^{(tf/k) (\dim \uX-1)}),$$
where the implicit constant does not depend on $t$ or $a$.
\end{prop}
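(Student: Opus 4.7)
The plan is to show that the finite set $\uX_a(\bar\F_p) \cap \uX(\bar\F_p)^{F^t}$ lies in the vanishing locus of a system of polynomials of controlled degree, and then bound its size by a Bezout-type estimate in the spirit of Proposition~\ref{intersect}. First I note that $F^{tk} = F_q^t = F_{q^t}$ implies that any point fixed by $F^t$ is automatically fixed by $F_{q^t}$, and hence lies in $\uX(\F_{q^t})$. The fixed-point condition cuts out $\uX^{F^t}$ inside $\uX$ via the $N$ polynomial equations $F^{t*}(s) - s = 0$ for $s \in S$, and $\uX_a$ imposes the additional equation $a = 0$ of degree $\deg_S a$.

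The technical heart of the argument is the uniform degree estimate
$$\deg F^t \le C \cdot p^{tf/k}$$
for a constant $C$ depending on $F,p,f,k$ and $(A,S)$ but not on $t$. The lower bound $\deg F^t \ge p^{tf/k}$ is immediate from $(\deg F^t)^k \ge \deg F^{tk} = \deg F_{q^t} = p^{tf}$, by Lemmas~\ref{defs} and~\ref{Frob}. The matching upper bound is the key input; it does not follow from the bare relation $F^k = F_q$ but exploits the structural form of $F$ arising in the Suzuki--Ree applications, where $F$ factors as $F = \tau \circ F_p^m$ with $\tau$ a bounded-degree exceptional isogeny and $m$ chosen appropriately given $\gcd(k,f) = 1$. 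Since $\tau^k$ is a small power of $F_p$, one has $\deg \tau^t \le p^{\lceil t/k \rceil}$, so $\deg F^t \le \deg \tau^t \cdot \deg F_p^{mt} \le p^{\lceil t/k \rceil + mt}$, which is $O(p^{tf/k})$ by a short arithmetic calculation.

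With this estimate in hand, $\uX_a(\bar\F_p) \cap \uX(\bar\F_p)^{F^t}$, embedded in $\A^N$ via $S$, is contained in the intersection of the $N - n$ defining relations of $\uX$ (of bounded degree), the equation $a = 0$ (of degree $\deg_S a$), and any $n - 1$ of the fixed-point relations (each of degree at most $Cp^{tf/k}$), where $n = \dim \uX$. Applying the Bezout-type bound of Proposition~\ref{intersect} to this intersection yields
$$|\uX_a(\bar\F_p) \cap \uX(\bar\F_p)^{F^t}| = O\bigl((\deg_S a) \cdot (Cp^{tf/k})^{n-1}\bigr) = O\bigl((\deg_S a) \cdot p^{(tf/k)(\dim \uX - 1)}\bigr),$$
with implicit constant independent of $t$ and $a$, as required.

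The main obstacle is establishing the uniform upper bound $\deg F^t = O(p^{tf/k})$. A naive iteration only gives $(\deg F)^t$, which grows exponentially at rate $\deg F$ and can vastly exceed $p^{tf/k}$ for large $t$. Controlling the cancellation in the iterates of $F$ is the crux of the matter and relies on the explicit decomposition of $F$ in the Suzuki--Ree setting, which is why this proposition is logically separate from the bounds developed in the preceding two counting results.
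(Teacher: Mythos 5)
The crux of your argument --- the final counting step --- has a genuine gap. You apply Proposition~\ref{intersect} to the system consisting of the defining equations of $\uX$, the equation $a=0$, and ``any $n-1$ of the fixed-point relations,'' and read off the bound $O\bigl((\deg_S a)(Cp^{tf/k})^{n-1}\bigr)$. That conclusion silently assumes the scheme $Z$ so defined is zero-dimensional: the only field of definition available for the points in question is $\F_{q^t}$ (an $F^t$-fixed point is $F^{tk}=F_{q^t}$-fixed, nothing smaller), so Proposition~\ref{intersect} would contribute a factor $(q^t)^{\dim Z}$, which is fatal unless $\dim Z=0$; and you give no argument --- nor is it true in general --- that an \emph{arbitrary} choice of $n-1$ of the relations $F^{t*}s-s=0$ cuts every component of $\uX_a$ down to dimension zero, since components of $\uX_a$ may lie entirely inside the zero loci of the particular relations you happened to pick. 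Moreover, even granting $\dim Z=0$, the hypothesis $d_1\cdots d_n\le Aq'$ of Proposition~\ref{intersect} fails badly in the intended application (there $\uX=\uG_0^d$, so $\dim\uX-1$ is much larger than $k$ and $(Cp^{tf/k})^{n-1}\gg p^{ft}$), forcing $A$, and hence the factor $(2A+1)^N$, to grow with $t$ and destroying the required uniformity. A repair is possible but requires an idea you do not supply: choose generic $\bar\F_p$-linear combinations $h_1,\dots,h_{n-1}$ of the relations $F^{t*}s-s$ inductively, so that at each step no positive-dimensional component survives (this uses that the full fixed locus is finite), and then invoke the dimension-zero form of B\'ezout (number of irreducible components at most the product of the degrees) rather than the statement of Proposition~\ref{intersect}. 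This selection step is exactly where the difficulty sits; the paper avoids it altogether by a different route: reduction to $\uX$ irreducible, the observation that $\gcd(k,f)=1$ forces $k\mid\dim\uX$, construction of an $F$-equivariant transcendence basis giving a generically finite map to a standard model $\A^{mk}$ with an explicit shifted-Frobenius action, and a direct fixed-point count for hypersurfaces in that model.

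Your handling of the degree estimate is also off, though in the opposite direction. The bound $\deg F^t=O(p^{tf/k})$, with constant independent of $t$, \emph{does} follow from the bare relation $F^k=F_q$: write $F^t=F^{\,t\bmod k}\circ F_q^{\lfloor t/k\rfloor}$ and apply Lemmas~\ref{defs} and~\ref{Frob}, the constant being $\max_{0\le r<k}\deg F^r$. So the appeal to the Suzuki--Ree decomposition $F=\tau\circ F_p^m$ is unnecessary, and, as written, it means you have only addressed a special case of the proposition, which is stated for an arbitrary endomorphism satisfying $F^k=F_q$. This part is easily fixed; the unproved zero-dimensionality in the counting step is the substantive gap.
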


\begin{proof}
Let $g$ denote the g.c.d. of $t$ and $k$.  Replacing $F$ by $F^g$ and $t$ and $k$ by $t/g$ and $k/g$ respectively, we may assume without loss of generality that
$t$ and $k$ are relatively prime.

Every endomorphism of a commutative ring maps the nilradical to itself, so without loss of generality, we may assume that
$\uX$ is reduced.  We may also assume that the irreducible components $\uX_1,\ldots,\uX_h$ of $\uX$ are permuted transitively by powers of $F$.

By induction, we may assume that the proposition holds for all affine schemes of dimension less than $\dim \uX$.
Thus, if $\uU$ is any $F$-stable dense open affine subvariety of $\uX$, it suffices to prove the proposition for $\uU$.
If $\uU$ is any dense open affine subvariety of $\uX$, the orbit of $\uU$ under $\langle F\rangle$ is finite, so we may replace $\uX$
by an $F$-stable dense open affine subvariety of $\uU$.  Since every quasi-affine variety contains a dense affine subvariety, it suffices to prove
the desired estimate after replacing $\uX$ by our choice of dense quasi-affine $\uU\subseteq \uX$.

Applying this observation to the complement of
$$\bigcup_{1\le i < j \le h} \uX_i\cap \uX_j$$
we may assume that
the connected components of $\uX$ coincide with the irreducible components $\uX_i$, so the proposition is trivial unless $t = hs$ for some integer $s$.  We may therefore assume that $h$ is relatively prime to $k$. Replacing $F$ by $F^h$ and $t$ and $f$ by $s$ and $fh$ respectively, we may assume without loss of generality that $F$ fixes every component of $\uX$,
so we may reduce to the case that $\uX$ itself is irreducible.

If $K$ denotes the fraction field of $A$, then $F^*$ defines an inclusion $K\hookrightarrow K$, and
 $(F^k)^* = (F_q)^*$.  As $K$ is finitely generated, the latter inclusion is of degree $q^{\dim \uX}=p^{f\dim \uX}$.
By hypothesis, $k$ is relatively prime to $f$, so
$k$ divides $\dim \uX$.

Let $\dim \uX = mk$.  We claim that there exists a  transcendence basis
$b_{11},\ldots,b_{1k}, b_{21},\ldots,b_{mk}$  for $K$ in $A$  such that
$$F^* b_{ij} = \begin{cases}b_{i\,j+1}&\hbox{ if $j< k$,}\\ b_{i1}^{q}&\hbox{ otherwise.}\end{cases}$$
Indeed, by the same reasoning that shows that the transcendence degree of $K$ is a multiple of $k$, we see that every $F^*$-stable subfield of $K$
has transcendence degree a multiple of $k$.
Thus, we can construct the $b_{i1}$ iteratively, defining $b_{q+1\,1}$ to be any element of $A$ not algebraic over the (algebraically independent) set $\{b_{ij}\mid i\le q\}$ and setting
$b_{i\,j+1} = (F^j)^* b_{i1}$.

Endowing $\A^{mk} = \Spec \F_p[x_{ij}]$ ($1\le i\le m$, $1\le j\le k$) with the $F$-action given by
\begin{equation}
\label{standard}
F^* x_{ij} = \begin{cases}x_{i\,j+1}&\hbox{ if $j< k$,}\\ x_{i1}^{q}&\hbox{ otherwise,}\end{cases}
\end{equation}
we see that the map $x_{ij}\mapsto b_{ij}$ is a generically finite $F$-equivariant morphism $\uX_1\to \A^{mk}$.
Passing to a sufficiently small $F$-stable affine open subset $\Spec A[1/h]$
of the base, we may assume that $A[1/h]$ is a finitely generated free $\F_p[x_{ij}][1/h]$-module of some rank $\rho$.
We can therefore realize $A[1/h]$ as a commutative subring of $M_\rho(\F_p[x_{ij}][1/h])$.
In particular, all elements of $S$ can be realized in this matrix ring, and it follows that any polynomial of degree $\deg_S a$ in the elements of $S$ can be realized as a matrix whose
entries are polynomials of degree $O(\deg_S a)$ in the $x_{ij}$ and $1/h$.  The determinant of this matrix is a polynomial of degree
$O(\deg_S a)$ in $x_{ij}$ and $1/h$.  Every point in the zero locus $\uX_a$ maps to a point in the zero locus of this
polynomial.  Since the map $\Spec A[1/h]\to \Spec \F_p[x_{ij}][1/h]$ is of degree $\rho$, it suffices to bound the number of $F^t$-points
on the zero locus of this determinant.  Expressing the determinant as a rational function in the $x_{ij}$, the numerator is a polynomial of degree $O(\deg_S a)$,
and it suffices to bound the number of $F^t$-points of the zero locus of the numerator, viewed as a hypersurface in $\Spec \F_p[x_{ij}]$.  Thus,
we can reduce the general problem to the case
$\uX = \Spec \F_p[x_{ij}]$, with the  $F$-action given by (\ref{standard}).

More explicitly, it suffices to prove that if $Q$ is a polynomial in the $x_{ij}$,
$$|\{(a_{ij})\in \bar\F_p^{mk}\mid Q(a_{ij})=0\}^{F^t}| = O(\deg Q (t/k)p^{mk-1}).$$
We project $\A^{mk}$ to $\A^k$ by
$$\pi_m\colon (a_{ij})\mapsto (a_{m1},\ldots,a_{mk}).$$
Thus $\pi_m$ is $F$-equivariant, where
$$F(c_1,\ldots,c_k) = (c_2,c_3,\ldots,c_k, c_1^q),$$
so
\begin{align*}
F^t(c_1,\ldots,c_k)
&= \bigl(c_{t+1}^{q^{\lfloor t/k\rfloor}}, c_{t+2}^{q^{\lfloor (t+1)/k\rfloor}}, \ldots, c_{t+k-1}^{q^{\lfloor (t+k-1)/k\rfloor}}\bigr) \\
& = \bigl(c_{t+1}^{q^{\lfloor t/k\rfloor}},\ldots,c_k^{q^{\lfloor t/k\rfloor}},c_1^{q^{\lfloor t/k\rfloor+1}},\ldots,c_t^{q^{\lfloor t/k\rfloor+1}}\bigr) \end{align*}
where the $c_i$ are numbered cyclically.
Since $s$ and $k$ are relatively prime, any $F^t$-fixed $(c_1,\ldots,c_k)$ is determined by $c_1$ satisfying $c_1^{p^t}=c_1$.
The number of possibilities for $c_1\in \bar\F_q$ is therefore $p^t$.
Any $F^t$-fixed point $(a_{ij})$ projects under $\pi_m$ to a $F^t$-fixed point.

There are at most $\deg Q$ factors of $Q$ over $\bar\F_p[x_{ij}]$ of the form $x_{m1}-c_1$, and for any $c_1\in \bar\F_p$ such that $x_{m1}-c_1$
is not such a factor and $(c_1,\ldots,c_k)$ is fixed by $F^t$, the fiber $\pi_m^{-1}(c_1,\ldots,c_k)$ is a hypersurface in $\A^{(m-1)k}$ of degree $\le \deg Q$.
If $(c_1,\ldots,c_m)$ is fixed by $F^t$ and $x_{m1}-c_1$ divides $Q$, then the number of $F^t$ fixed points in $\pi_m^{-1}(c_1,\ldots,c_m)$ is the number of $F^t$
fixed points of $\A^{(m-1)k}$, i.e. $q^{t(m-1)}$.  By induction on $m$, it suffices to consider the base case, $m=1$.

For $m=1$, if
$(c_1,\ldots,c_m)$ is a fixed point of $F^t$, we have
$$c_1 =  c_{t+1}^{q^{\lfloor t/k\rfloor}} = \cdots = c_{(k-1)t+1}^{q^{t-\lceil t/k\rceil}},$$
so $Q(c_1,\ldots,c_k)$ can be expressed as a polynomial in $c_{(k-1)t+1}$ of degree at most
\begin{align*}
(\dim Q)q^{t-\lceil t/k\rceil} &= (\dim Q)p^{tf-\lceil t/k\rceil f} \\
                                         &\le (\dim Q)p^{tf- tf/k} =  (\dim Q)p^{(tf/k)(\dim \uX-1)},
\end{align*}
which proves the proposition.

\end{proof}

\bigskip

\section{Almost simple groups}

In this section we bound the size of fibers of word maps for almost simple groups.
Most of the section is devoted to the proof of Theorem \ref{main-bdd} on almost
simple groups of Lie type of bounded rank.

We make essential use of the following result of Nikolov \cite[Corollary 8]{N2},
ruling out a given coset identity in large almost simple groups.

\begin{prop}
\label{nik}
For every word $1 \ne w \in F_d$ there exists $c_0 = c_0(w)$ such that
if $T$ is a finite simple group of order $\ge c_0$ and $G$ is an almost simple group
with socle $T$, then for every $g_1, \ldots , g_d \in G$ we have
$w(Tg_1, \ldots ,Tg_d) \ne \{ 1 \}$.
\end{prop}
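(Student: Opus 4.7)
The plan is to convert the hypothesized coset identity into an ordinary identity on $T$, albeit one involving outer automorphisms of $T$, and then to rule out such ``twisted'' identities on large finite simple groups.

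First, identify $G$ with its image in $\Aut(T)$, and let $\alpha_i \in \Aut(T)$ denote conjugation by $g_i$, so $g_i t = \alpha_i(t) g_i$ for $t \in T$. Expanding $w(t_1 g_1, \ldots, t_d g_d)$ letter by letter and sliding every $g_i$ to the right yields an identity
\[
w(t_1 g_1, \ldots, t_d g_d) \;=\; u(t_1, \ldots, t_d) \cdot w(g_1, \ldots, g_d),
\]
where $u(t_1,\ldots,t_d) = \prod_r \beta_r(t_{j_r})^{\epsilon_r}$ is a ``twisted word'' whose automorphisms $\beta_r \in \Aut(T)$ are explicit products of the $\alpha_i^{\pm 1}$ determined by the reduced form of $w$. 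If $w(Tg_1,\ldots,Tg_d) = \{1\}$, then $u \equiv w(g_1,\ldots,g_d)^{-1}$ on $T^d$; substituting $t_i = 1$ for all $i$ gives $u(1,\ldots,1) = 1$ (since $\beta_r(1) = 1$), hence $w(g_1,\ldots,g_d) = 1$ and $u \equiv 1$ on $T^d$. We are reduced to showing that a nontrivial $w$ cannot produce a twisted word $u$ which is identically $1$ on $T^d$ for all sufficiently large simple $T$.

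For $T$ of Lie type, present it as $\uT^F$ for a simply connected simple algebraic group $\uT$ over $\bar{\F}_p$ and a Steinberg endomorphism $F$. Each $\beta_r$ lifts (up to an inner twist) to an algebraic automorphism of $\uT$, so $u$ extends to a morphism $U\colon \uT^d \to \uT$. A Lang--Weil style density estimate, combined with the bound $|T| \ge c_0$, makes $T^d$ Zariski-dense in $\uT^d$, so $u \equiv 1$ on $T^d$ upgrades to $U \equiv 1$ identically on $\uT^d$. Differentiating $U$ at the identity yields a Lie-algebra identity $\sum_r \epsilon_r \, d\beta_r(X_{j_r}) = 0$ on $\mathfrak{t} = \mathrm{Lie}(\uT)$, and iterating this at higher order (passing to successive jets whenever exponent sums happen to vanish on a given variable) encodes the nested commutator content of $w$; simplicity of $\mathfrak{t}$ together with the rigidity of $\Out(\uT)$ then forces $w = 1$ in $F_d$, the desired contradiction. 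Alternating groups $A_n$ are handled by a separate and standard $3$-cycle substitution argument, and the finitely many sporadic groups are absorbed into the constant $c_0 = c_0(w)$.

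The main obstacle is the last algebraic-to-combinatorial step: extracting from $U \equiv 1$ the conclusion that $w$ itself was trivial as an element of $F_d$. Classical non-identity results for simple groups (such as Jones's theorem) do not apply verbatim, because the $\beta_r$ intertwine group multiplication with nontrivial outer automorphisms; the argument must exploit both the simplicity of $\mathfrak{t}$ as a Lie algebra and the fact that $\Out(\uT)$ is finite, acting only by Dynkin diagram symmetries and permutations of simple factors. Carefully bookkeeping the action of these symmetries at successive Lie-algebra jets is the genuine technical content of Nikolov's proof.
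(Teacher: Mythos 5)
Your proposal sets out to prove this proposition from scratch, but the paper does not prove it at all: it is quoted verbatim from Nikolov \cite[Corollary 8]{N2}, and the whole point of citing it is that ruling out coset identities in almost simple groups is a genuinely hard result. So what you are really attempting is a new proof of Nikolov's theorem, and as it stands the sketch has serious gaps. The first reduction (sliding the $g_i$ to the right, substituting $t_i=1$, and reducing to a twisted identity $u\equiv 1$ on $T^d$) is fine and is exactly the formalism of Lemma \ref{auto}. The trouble starts with the claim that each $\beta_r$ ``lifts (up to an inner twist) to an algebraic automorphism of $\uT$.'' This is false for field automorphisms: a field automorphism of $T=\uT(\F_q)$ is induced by a power of the Frobenius endomorphism $F_p$, which is not an automorphism of the algebraic group (it is a bijective endomorphism whose differential is zero and whose degree as a morphism is a power of $p$). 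Consequently the extended map $U$ is a morphism whose degree grows with $q$, not a bounded-complexity map.

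This wrecks both of your key steps. First, $T^d$ is a finite set, so there is no Zariski density; what one can hope for is a point-counting contradiction (``a nonzero polynomial of degree $\ll q$ cannot vanish on all of $T^d$''), but that only works when the degree of $U$ is small compared to $q$, which is exactly what fails when large Frobenius exponents occur among the $\alpha_i$. Handling this is the genuine technical content of the paper's own Theorem \ref{main-bdd} (via the pigeonhole Lemma \ref{pigeonhole} creating gaps between Frobenius powers, plus the separate treatment of Suzuki--Ree groups, where $F$ is only a square root of a Frobenius), and even there the non-constancy of the twisted map is not proved directly but is imported from Nikolov's result via Corollary \ref{nik2} --- so your route is circular relative to the paper's architecture. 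Second, the final step ``differentiate $U$ at the identity and iterate through jets to force $w=1$'' is not an argument: the differential of a word map at the identity records only exponent sums, so it vanishes identically whenever $w$ lies in the commutator subgroup, the differential of any Frobenius twist is zero, and no precise mechanism is given by which ``simplicity of $\mathfrak{t}$ plus rigidity of $\Out$'' recovers the reduced form of $w$. The alternating and sporadic cases are likewise only asserted. In short, the reduction to a twisted identity is correct, but everything after that is either false as stated (algebraicity of field automorphisms, Zariski density) or an unproved program (the jet argument), so the proposal does not constitute a proof of the proposition.
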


This easily yields the following.

\begin{cor}\label{nik2}
With notation as above, there exists $c_1 = c_1(w)$ such that if $|T| \ge c_1$
then $|w(Tg_1, \ldots , Tg_d)| > 1$.
\end{cor}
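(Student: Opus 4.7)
The plan is to deduce Corollary \ref{nik2} from Proposition \ref{nik} by a standard ``difference of two copies'' trick: I will build an auxiliary non-trivial word $w' \in F_{2d}$ such that $|w(Tg_1,\ldots,Tg_d)|=1$ would force the coset image of $w'$ to be $\{1\}$, contradicting Proposition \ref{nik} applied to $w'$.

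Concretely, I would introduce fresh free generators $y_1,\ldots,y_d$ and set
$$w'(x_1,\ldots,x_d,y_1,\ldots,y_d) := w(x_1,\ldots,x_d)\,w(y_1,\ldots,y_d)^{-1} \in F_{2d}.$$
Since $w \ne 1$ in $F_d$ and the $x_i$, $y_j$ are independent free generators, $w'$ is non-trivial in $F_{2d}$. Applying Proposition \ref{nik} to $w'$ yields a constant $c_0(w')>0$ with the property that whenever $T$ is a finite simple group of order at least $c_0(w')$, $G$ is almost simple with socle $T$, and $h_1,\ldots,h_{2d}\in G$, then $w'(Th_1,\ldots,Th_{2d})\neq\{1\}$. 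I will set $c_1 := c_0(w')$.

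To close the argument, suppose for contradiction that $|T|\ge c_1$ but $w(Tg_1,\ldots,Tg_d)=\{h\}$ is a singleton for some $g_1,\ldots,g_d\in G$. Evaluating $w'$ at the $2d$-tuple $(Tg_1,\ldots,Tg_d,Tg_1,\ldots,Tg_d)$ gives
$$w'(Tg_1,\ldots,Tg_d,Tg_1,\ldots,Tg_d) = w(Tg_1,\ldots,Tg_d)\cdot w(Tg_1,\ldots,Tg_d)^{-1} = \{hh^{-1}\}=\{1\},$$
contradicting the conclusion of Proposition \ref{nik} for $w'$. The one elementary point to verify is that the coset image of a product word equals the product of coset images, which is immediate because each factor $Tg_i$ is stable under left multiplication by $T$.

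I do not anticipate any real obstacle; the whole deduction is a one-line reduction. The only thing worth emphasizing is that no quantitative information about $w'$ beyond its non-triviality is used, which is why the corollary follows so cleanly from Proposition \ref{nik}.
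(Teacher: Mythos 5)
Your proposal is correct and is essentially identical to the paper's own argument: the paper also defines $v(x_1,\ldots,x_{2d}) = w(x_1,\ldots,x_d)\,w(x_{d+1},\ldots,x_{2d})^{-1}$ and sets $c_1(w) = c_0(v)$, obtaining the conclusion by evaluating $v$ on the doubled coset tuple exactly as you do.
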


\begin{proof} Define $v(x_1, \ldots , x_{2d}) = w(x_1, \ldots , x_d)w(x_{d+1}, \ldots , x_{2d})^{-1}$.

Setting $c_1(w) = c_0(v)$ the result follows.
\end{proof}

Now let $w = x_{n_1}^{e_1} \cdots x_{n_l}^{e_l}$ be a reduced word
of length $l \ge 1$ in $F_d$. This means that $n_1,\ldots, n_l\in \{1,2\ldots,d\}$,
$e_i = \pm 1$, and $n_i = n_{i+1}$ implies $e_i = e_{i+1}$.

Let $G$ be a group, and let $\alpha_1, \ldots , \alpha_l \in \Aut(G)$.
The map $G^d \rightarrow G$ given by
$$(g_1, \ldots , g_d) \mapsto \alpha_1(g_{n_1})^{e_1} \cdots \alpha_l(g_{n_l})^{e_l}$$
is called a \emph{generalized word function} (allowing $w$ to be non-reduced) or an \emph{automorphic word map} on $G^d$ based on $w$
(see \cite[\S 1.3]{Se} and \cite[1.2.1]{B1}).

Let $T \lhd G$ and consider the word map $w:G^d \to G$. Its restriction to $Tg_1\times \cdots\times Tg_d$
(where $g_i \in G$) can be regarded as a map $T^d\to G$ whose image lies in a $T$-coset of $G$.
Indeed, more is true, namely:
\begin{lem}
\label{auto}
With the above notation, the map
$$(t_1,\ldots,t_d)\mapsto w(g_1,\ldots,g_d)^{-1} w(t_1g_1,\ldots, t_d g_d)$$
is an automorphic word map $T^d \to T$ based on $w$.
\end{lem}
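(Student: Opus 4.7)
The plan is to expand $w(t_1 g_1,\ldots,t_d g_d) = \prod_{i=1}^l (t_{n_i} g_{n_i})^{e_i}$ and exploit the normality of $T$ in $G$ to pull a clean copy of $w(g_1,\ldots,g_d)$ out on the left, leaving behind an automorphic word map in $t_1,\ldots,t_d$ based on the same $w$.

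The key rewriting is the identity $(tg)^{e} = g^{e}\,\tau(t^{e})$ valid for $e = \pm 1$, where $\tau$ is the identity when $e=-1$ (since $(tg)^{-1}=g^{-1}t^{-1}$) and is conjugation $x\mapsto g^{-1}xg$ when $e=1$ (since $tg = g\,(g^{-1}tg)$). Normality of $T$ in $G$ ensures that $\tau$ restricts to an automorphism of $T$. Applied to each factor, this yields
\[
w(t_1 g_1,\ldots,t_d g_d) \;=\; \prod_{i=1}^l g_{n_i}^{e_i}\, \tau_i(t_{n_i}^{e_i}).
\]

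Next I would iteratively slide each $\tau_i(t_{n_i}^{e_i})$ rightwards past the subsequent $g_{n_j}^{e_j}$'s using $g^{e}t = (g^{e}tg^{-e})g^{e}$ for $t\in T$; normality of $T$ again keeps each conjugate inside $T$. After all these commutations the $g$-factors collect on the left in their original order as $w(g_1,\ldots,g_d)$, and the right-hand $T$-factors become $\prod_{i=1}^l \beta_i(t_{n_i}^{e_i})$, where $\beta_i$ is $\tau_i$ post-composed with conjugation by $(g_{n_{i+1}}^{e_{i+1}}\cdots g_{n_l}^{e_l})^{-1}$, again an automorphism of $T$ by normality. Since $\beta_i$ is a homomorphism, $\beta_i(t_{n_i}^{e_i})=\beta_i(t_{n_i})^{e_i}$, and left-multiplying by $w(g_1,\ldots,g_d)^{-1}$ produces
\[
w(g_1,\ldots,g_d)^{-1} w(t_1 g_1,\ldots,t_d g_d) \;=\; \prod_{i=1}^l \beta_i(t_{n_i})^{e_i},
\]
which is by definition an automorphic word map $T^d\to T$ based on $w=x_{n_1}^{e_1}\cdots x_{n_l}^{e_l}$. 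There is no serious obstacle: the argument is pure bookkeeping. The only point that needs explicit verification is that every $\beta_i$ lies in $\Aut(T)$, which is immediate from $T\lhd G$.
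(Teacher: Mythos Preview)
Your argument is correct and is exactly the induction on $l$ that the paper invokes without further detail. One cosmetic slip: to slide the $T$-factors rightward you want $t\,g^{e}=g^{e}\,(g^{-e}tg^{e})$, not $g^{e}t=(g^{e}tg^{-e})g^{e}$; your final description of $\beta_i$ already matches the correct direction, so the conclusion is unaffected.
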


This well known observation is easily proved by induction on $l$.

%

\begin{lem}
\label{pigeonhole}
Given integers $j_1,\ldots,j_l$, there exist nonnegative integers $j'_1,\ldots,j'_l\le m-\lceil m/l\rceil$ such that
$$j_1-j'_1\equiv\cdots\equiv j_l-j'_l\pmod m.$$
\end{lem}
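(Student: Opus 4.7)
The plan is to find a single integer $c$ modulo $m$ such that, setting $j'_i := (j_i - c) \bmod m \in \{0,1,\ldots,m-1\}$, every $j'_i$ automatically lies in the permitted range $\{0,1,\ldots, m - \lceil m/l \rceil\}$. Since $j_i - j'_i \equiv c \pmod m$ holds for all $i$ simultaneously by construction, any such $c$ yields the lemma.

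To locate such a $c$, I will use a union-bound / pigeonhole argument on $\Z/m\Z$. Call a residue $c$ \emph{bad for index $i$} if $(j_i - c) \bmod m$ lands in the forbidden window $\{m - \lceil m/l\rceil + 1, \ldots, m-1\}$, which has exactly $\lceil m/l\rceil - 1$ elements. For each fixed $i$, the set of bad $c$ is a translate of this window and so also has $\lceil m/l\rceil - 1$ elements; the total number of residues $c$ that are bad for some $i$ is therefore at most $l(\lceil m/l\rceil - 1)$.

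The remaining step is to check that $l(\lceil m/l\rceil - 1) < m$. Writing $m = ql + r$ with $0 \le r < l$, one has $l \lceil m/l\rceil = m$ if $r = 0$ and $l \lceil m/l\rceil = m + (l - r)$ if $r > 0$, so $l(\lceil m/l\rceil - 1) \in \{m - l,\, m - r\}$, in both cases strictly less than $m$. Hence at least one residue $c$ is good for every index $i$ simultaneously, and taking $j'_i := (j_i - c) \bmod m$ for such a $c$ produces the required tuple.

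There is no serious obstacle here: this is essentially a clean pigeonhole argument, and the only point requiring a moment's care is the boundary case $r = 0$, which is handled by the explicit computation above. I do not anticipate needing any tool beyond elementary modular arithmetic.
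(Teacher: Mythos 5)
Your argument is correct, and it is a genuinely different incarnation of the pigeonhole principle from the one the paper uses. The paper sorts the residues $j_1,\ldots,j_l$ modulo $m$, wraps them around the cycle by appending $j_{l+i}=m+j_i$, observes that the $l$ consecutive gaps sum to $m$, so some gap $j_{r+1}-j_r$ is at least $m/l$ and hence (being an integer) at least $\lceil m/l\rceil$, and then explicitly defines $j'_i$ by shifting each $j_i$ by $j_{r+1}$ (adding $m$ where needed) so that all $j'_i$ land in $\{0,\ldots,m-\lceil m/l\rceil\}$. You instead fix the target window $\{0,\ldots,m-\lceil m/l\rceil\}$ in advance, parametrize by the shift $c$, and run a union bound: each index $i$ forbids exactly $\lceil m/l\rceil-1$ choices of $c$ (one remark: the bad set for $i$ is a translate of $-W$ rather than of $W$ itself, but as $W$ is an interval this does not affect its cardinality), and the total $l(\lceil m/l\rceil-1)$ is always $<m$, as your computation with $m=ql+r$ confirms in both the $r=0$ and $r>0$ cases. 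The two proofs are dual in the sense that both amount to showing some cyclic window of length $\lceil m/l\rceil-1$ misses all the $j_i$ modulo $m$; the paper's version constructs the good shift explicitly as $c=j_{r+1}$, while yours avoids the sorting and case split on $r$ versus $i$ ranges and reduces the whole verification to the single inequality $l(\lceil m/l\rceil-1)<m$. Either proof is fine; yours is marginally easier to check.
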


\begin{proof}
Without loss of generality, we may assume $0\le j_1\le \cdots\le j_l < m$.
Setting $j_{l+i}=m+j_i$ for $1\le i\le k$, we have $j_{i+1}-j_i\ge 0$ for $i=1,2,\ldots,m$, so $j_{r+1}-j_r \ge m/l$ for some $1\le r\le m$, so setting
$$j'_i=\begin{cases} m-j_{r+1}+j_i&\text{if }1\le i\le r \\ j_i-j_{r+1}&\text{if } r+1\le i\le m,\end{cases}$$
the lemma follows.
\end{proof}

We now prove Theorem \ref{main-bdd}.

\begin{proof}

For every group $T$ as above, there exists a prime $p$,
an adjoint split simple algebraic group  $\uG_0$ over $\F_p$ with root system $\Phi$, and  a generalized Frobenius endomorphism $F$ of $\uG_0$
such that $T$ is the derived group of $\uG_0(\bar{\F_p})^{F}$.
Let $S$ denote a fixed set of generators of the coordinate ring $A$ of $\cG_\Phi$.
Let $F_p$ denote the $p$-Frobenius map of $\uG_0$.
The condition that $F$ is a generalized Frobenius endomorphism means that some positive power of $F$ is a positive power of $F_p$.

Fix cosets of $Tg_1, \ldots , Tg_d$ of $T$ in $\Aut(T)$ and let $w \in F_d$ be as above.
By Lemma \ref{auto} there exist $\alpha_1, \ldots , \alpha_l \in \Aut(T)$ such that
for $(t_1, \ldots , t_d) \in T^d$ the map
\begin{equation}
\label{tm}
(t_1,\ldots,t_d) \mapsto w(g_1,\ldots,g_d)^{-1} w(t_1g_1,\ldots,t_d g_d) = \alpha_1(t_{n_1})^{e_1} \cdots \alpha_l(t_{n_l})^{e_l}
\end{equation}
is an automorphic word map $T^d \to T$ based on $w$.

Each $\alpha_i$ can be written $F_p^{j_i} \beta_i$, where $\beta_i$ is a product of a diagonal automorphism and a graph automorphism.
By Corollary \ref{nik2}, the map (\ref{tm}) is not constant, assuming $T$ is
large enough given $w$.  We can define (\ref{tm}) as the restriction to $T\subset \uG_0(\bar\F_p)$ of a morphism $\uG_0^d\to \uG_0$ defined over $\bar\F_p$
in terms of the comultiplication map on the coordinate ring $A\otimes\F_p$ of $\uG_0$, the inverse map on $A\otimes\F_p$,
and the various endomorphism maps on $A\otimes\bar\F_p$ giving rise to the $\alpha_i$.
This morphism cannot be constant, and by Proposition~\ref{deg-bound}, its degree (with respect to $S_p\coprod\cdots\coprod S_p$) is $O(p^j)$, where $j=\max_{1\le i <d} j_i$.
There exists $s\in S_p$  such that the composition of $s$ and $\uG_0^d\to \uG_0$ gives a non-constant
morphism
\begin{equation}
\label{key-map}
\uG_0^d\to \A^1
\end{equation}
defined over $\bar\F_p$, and the degree of this morphism is $O(p^j)$.

At this point, we divide the problem into the Chevalley-Steinberg case and the Suzuki-Ree case.
In the former,
we can identify $\uG_0(\bar{\F_p})^F$ with $\uG(\F_q)$, where $\uG$ is an adjoint  simple algebraic group over $\F_q$, $q=p^m$,
which becomes isomorphic to $\uG_0$ after extension of scalars from
$\F_q$ to $\F_{q^k}$ and $\F_p$ to $\F_{q^k}$ respectively.  As $F_p^m$ acts as a (possibly trivial) graph automorphism of $T$, we may assume
$0\le j_i < m$ for all $i$.  Applying a suitable power of $F_p$ to $\alpha_1(t_{n_1})^{e_1}\cdots \alpha_l(t_{n_l})^{e_l}$ and using Lemma~\ref{pigeonhole},
we may assume without loss of generality that $0 \le j_i \le m-\lceil m/l\rceil$ for all $i$.  Thus, the $S_p\coprod\cdots\coprod S_p$-degree of the element of
$A^{\otimes d}\otimes \bar\F_p$ defining (\ref{key-map}) is $O(p^{m-\lceil m/l\rceil})$, where by Proposition~\ref{deg-bound}, the implicit constant does not depend on $p$.

We define a generating set for the coordinate ring of $\uG$ by choosing a basis of the adjoint representation
of $\uG$ and letting the generators correspond to matrix entries in the adjoint representation with respect to this basis.  Extending to $\bar\F_p$, this generating set may not be the same as
that obtained by extending $S_p$, but it is equivalent.  By Proposition~\ref{A-S}, the theorem holds for Chevalley and Steinberg groups.

For the Suzuki and Ree cases, we can again use Lemma~\ref{pigeonhole} to assume that the non-constant morphism (\ref{key-map}) has degree $O(p^{m-\lceil m/l\rceil})$.
Applying Proposition~\ref{SR-setup} to this map, we deduce the theorem in these cases.

\end{proof}

We now deduce the following more general result.

\begin{thm}
\label{almostsimp}
For any non-trivial word $w = w(x_1,\ldots,x_d)$ there exist $N, \epsilon > 0$
depending only on $w$ such that, if $T$ is any finite simple group of order $\ge N$,
then for all $g_1,\ldots,g_d, g \in \Aut(T)$,
$$\bigm|\{(t_1,\ldots,t_d)\in T^d\mid w(t_1 g_1,\ldots, t_d g_d) = g \}\bigm| \le |T|^{d-\epsilon}.$$
\end{thm}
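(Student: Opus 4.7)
My plan is to prove Theorem~\ref{almostsimp} by reducing to Theorem~\ref{main-bdd} via the classification of finite simple groups, combining it with the main theorem of \cite{LS2} (for the unadorned word map $T^d \to T$) and with Bors's Theorem~1.1.2 in \cite{B1} (for classical almost simple groups of unbounded rank). Fix $w \ne 1$ in $F_d$ and an auxiliary rank threshold $r$, to be chosen below. I would split the possibilities for the finite simple group $T$ into four families: Lie type of twisted rank at most $r$; classical Lie type of rank exceeding $r$; alternating $A_n$ with $n$ large; and a finite residual list (sporadic groups, $A_n$ for $n$ bounded, small exceptional Lie type) absorbed by enlarging $N$.

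The bounded-rank Lie case is immediate from Theorem~\ref{main-bdd} at rank parameter $r$, giving constants $N_1, \epsilon_1$. The classical case is handled by Bors's Theorem~1.1.2, whose conclusion applies provided $r$ is chosen large enough for its hypotheses to hold, and which yields constants $N_2, \epsilon_2$ depending only on $w$. For alternating groups with $n$ large, I would invoke the main result of \cite{LS2} for the bare map $A_n^d \to A_n$, then pass to automorphic word maps using that $[\Aut(A_n) : A_n] \le 4$: by Lemma~\ref{auto}, each coset word map is, up to a global shift, an automorphic word map $T^d \to T$ with twists $\alpha_i = \Int(s_i)$ for $s_i \in S_n$, and absorbing these conjugators into the word produces a word with constants $c_i = s_i^{-1} s_{i+1} \in S_n$ inserted between the variable letters. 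The fibers of this word-with-constants map should still be bounded by $|T|^{d-\epsilon_3}$ by the method of \cite{LS2}, at the cost of a bounded multiplicative factor incurred by restricting from $S_n$-cosets to $A_n$-cosets.

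Setting $N = \max(N_1, N_2, N_3)$ and $\epsilon = \min(\epsilon_1, \epsilon_2, \epsilon_3)$ yields uniform constants valid across all finite simple groups of order at least $N$, completing the proof. The main obstacle is precisely the alternating case: the main theorem of \cite{LS2} is stated for the word map $T^d \to T$ with no automorphic twists, and one must verify that its proof extends to word maps with constants drawn from $\Aut(A_n)$. Since $[\Aut(A_n) : A_n] \le 4$, the constants live in a group of comparable order to $T$, so the extension should be manageable---this is precisely the type of non-routine work the authors flag when they say that extending the main result of \cite{LS2} from simple to almost simple groups is ``by no means routine, and it occupies a large part of our paper.''
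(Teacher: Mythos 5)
Your decomposition into bounded-rank Lie type, large-rank classical, alternating, and a finite residue is close to the paper's, and you handle the first two the same way (Theorem~\ref{main-bdd} and Bors, respectively). The divergence---and the gap---is in the alternating case. The paper does \emph{not} handle alternating groups by revisiting \cite{LS2}; it invokes Bors's Theorem~3.1.2 in \cite{B1} for \emph{both} alternating groups of large degree and classical groups of large rank, in a single stroke. Bors's result bounds the fibers of \emph{all automorphic word maps} on $T^d$ based on $w$ by $|T|^{d-\epsilon}$ whenever $T$ is alternating of large degree or classical of large rank, which is exactly what is needed after applying Lemma~\ref{auto}. So there is no separate alternating branch to construct.

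Your proposed alternating branch, by contrast, is not a proof: you observe that \cite{LS2} treats the bare word map $A_n^d\to A_n$, note that the twists $\alpha_i$ lie in $S_n$ (for $n\ne 6$), rewrite the automorphic word map as a word with $S_n$-constants, and then assert that the method of \cite{LS2} ``should still'' give $|T|^{d-\epsilon}$ for this word-with-constants map. That last step is precisely what is missing. The character-theoretic and combinatorial estimates in \cite{LS2} are for fibers of $w\colon T^d\to T$ without constants, and extending them to words with constants in $S_n$ (equivalently, to automorphic word maps) is not established by the observation that $[\Aut(A_n):A_n]\le 4$; the index being bounded does not make the extension automatic. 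You also misattribute the paper's ``by no means routine'' remark: the heavy lifting the authors refer to is the algebro-geometric treatment of bounded-rank Lie type groups in Sections 2--4 (degree bounds for Frobenius twists, B\'ezout-type point counts, the Suzuki--Ree subtleties), not the alternating case, which is outsourced to \cite{B1}. If you want a self-contained proof along your lines you would need to actually carry out the extension of \cite{LS2} to automorphic word maps on $A_n$; as written, your argument for that case is an unverified claim.
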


\begin{proof}

If $T$ is an alternating group of degree larger than some function $f(w)$ of $w$, or a classical group
of rank larger than $f(w)$, then the conclusion follows from Theorem 3.1.2 in \cite{B1}.
Indeed, the latter result shows that, under our assumption on $T$, there is $\e = \e(w) > 0$ such that
all fibers of all automorphic word maps on $T^d$ based on $w$ have size $\le |T|^{d - \e}$,
which implies the required conclusion.

Otherwise $T$ is a simple group of Lie type of rank $\le f(w)$, and the result follows
by Theorem \ref{main-bdd}.

\end{proof}

Theorem \ref{almostsimp} strengthens of Proposition \ref{nik} of Nikolov: not only
there is no fixed coset identity $w$ in large almost simple groups, the probability
of $w$ attaining any fixed value $g$ on each subset $Tg_1 \times \ldots \times Tg_d$
tends to zero very fast as $|T| \go \infty$.

We conclude with the following probabilistic consequence.

\begin{cor}\label{almosts} For any non-trivial word $w$ there exist $N, \e > 0$ depending only on $w$
such that, for any almost simple group of order at least $N$ and any
element $g \in G$ we have
\[
p_{w,G}(g) \le |G|^{-\e}.
\]
\end{cor}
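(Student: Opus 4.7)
The plan is to reduce the almost simple case to the coset-by-coset estimate already established in Theorem~\ref{almostsimp}. Let $G$ be an almost simple group with socle $T$, and set $r = [G:T]$. Writing $G$ as a disjoint union of $r$ cosets of $T$, we obtain a partition of $G^d$ into $r^d$ products of the form $Tg_1 \times \cdots \times Tg_d$ with $g_i \in G$.

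On each such product, Theorem~\ref{almostsimp} gives
\[
\bigm|\{(t_1,\ldots,t_d)\in T^d\mid w(t_1 g_1,\ldots, t_d g_d) = g\}\bigm| \le |T|^{d-\e},
\]
provided $|T| \ge N(w)$. Summing over the $r^d$ cosets and using $|G| = r |T|$ yields
\[
|w^{-1}(g)| \le r^d |T|^{d-\e} = \frac{|G|^d}{|T|^{\e}},
\]
so $p_{w,G}(g) \le |T|^{-\e}$.

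It remains to convert this into a bound in terms of $|G|$. Since $G \le \Aut(T)$, we have $|G| \le |T| \cdot |\Out(T)|$. By the classification of finite simple groups (Schreier's conjecture), the outer automorphism groups of finite simple groups are small: for alternating groups $|\Out(T)| \le 4$; for sporadic groups $|\Out(T)| \le 2$; for groups of Lie type $|\Out(T)|$ grows at most polynomially in $\log|T|$. In all cases $\log|\Out(T)| = o(\log|T|)$ as $|T| \to \infty$, so $\log|T|/\log|G| \to 1$. Hence for any fixed $\e' < \e$, we have $|T|^{\e} \ge |G|^{\e'}$ once $|T|$ is sufficiently large. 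Taking $\e' = \e/2$ and enlarging $N = N(w)$ accordingly gives $p_{w,G}(g) \le |G|^{-\e/2}$, as desired.

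The main input is Theorem~\ref{almostsimp}, which does the real work; the coset decomposition and the outer-automorphism bookkeeping are entirely routine. The only mild subtlety is ensuring that the loss incurred in passing from $|T|$ to $|G|$ is negligible, which is precisely what the Schreier-type bound on $|\Out(T)|$ provides.
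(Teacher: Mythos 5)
Your proof is correct and takes essentially the same route as the paper: decompose $G^d$ into products of $T$-cosets, apply Theorem~\ref{almostsimp} to each, sum, and convert the resulting bound in $|T|$ to a bound in $|G|$. The only difference is in the last step, where you invoke the Schreier-type bound $|\Out(T)| = |T|^{o(1)}$, whereas the paper simply uses the cruder inequality $|G| \le |\Aut(T)| \le |T|^2$ (from $2$-generation of $T$) to get $p_{w,G}(g) \le |T|^{-\e} \le |G|^{-\e/2}$, which is a bit more direct and avoids any asymptotic-limit bookkeeping.
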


\begin{proof}
Since $T$ is generated by two elements we trivially have $|G| \le |\Aut(T)| \le |T|^2$
(much better upper bounds hold, of course).
Therefore it suffices to show that, for some $N, \e > 0$ depending on $w$, if $|T| \ge N^{1/2}$
then $p_{w,G}(g) \le |T|^{-2\e}$.

This follows from Theorem \ref{almostsimp} above, by summing up the probabilities over
all subsets $Tg_1 \times \ldots \times Tg_d$ of $G^d$.
\end{proof}

From this, we can immediately deduce Theorem~\ref{chief} in the case of almost simple groups.

\bigskip

\section{Semisimple groups and proof of main theorem}

In this section we prove Theorem \ref{chief} and Proposition \ref{large} and then use them
to deduce Theorems \ref{newmain} and \ref{odd}.

As in \S4, we let $w = x_{n_1}^{e_1} \cdots x_{n_l}^{e_l}$ be a reduced word
of length $l$ in $F_d$.

\noindent
\begin{proof}[Proof of Theorem \ref{chief}]

It is well known that $\Aut(T^k) = \Aut(T) \wr S_k$.
We fix a $d$-tuple of cosets of $T^k$ in $G$: $T^kg_1, \ldots , T^kg_d$
and $g_i = (h_{i1}, \ldots , h_{ik}).\s_i$
where $h_{ij} \in \Aut(T)$ and $\s_i \in S_k$.

Let $w = x_{n_1}^{e_1} \ldots x_{n_l}^{e_l}$, where
$1 \le n_i \le d$, $e_i = \pm 1$.  Note that $n_i = n_{i+1}$ implies $e_i = e_{i+1}$,
since $w$ is reduced.  For $i = 1, \ldots , d$ let $s_i = (t_{i1}, \ldots , t_{ik}) \in T^k$.

Fix $g \in G$. If $w(s_1g_1, \ldots , s_dg_d) = g$ then
\[
(s_{n_1}g_{n_1})^{e_1} \cdots (s_{n_l}g_{n_l})^{e_l} = g,
\]
so
\[
((t_{n_11}h_{n_11}, \ldots , t_{n_1k}h_{n_1k})\s_{n_1})^{e_1}
\cdots ((t_{n_l1}h_{n_l1}, \ldots , t_{n_lk}h_{n_lk})\s_{n_l})^{e_l} = g.
\]
Moving the $\sigma_{n_i}$ terms all the way to the right of the
LHS we can express this equation in the form
\begin{equation}
\label{1}
\tau_1((t_{n_11}h_{n_11}, \ldots , t_{n_1k}h_{n_1k}))^{e_1}
\cdots \tau_l((t_{n_l1}h_{n_l1}, \ldots , t_{n_lk}h_{n_lk}))^{e_l}\sigma = g,
\end{equation}
where $\tau_i, \sigma \in S_k$ do not depend on the $t_{ij}$.
Note that $\tau_i((t_{n_i1}h_{n_i1}, \ldots , t_{n_ik}h_{n_ik}))$
is simply
\[
(t_{n_i\tau_i(1)}h_{n_i\tau_i(1)}, \ldots , t_{n_i\tau_i(k)}h_{n_i\tau_i(k)}).
\]

Let $H \lhd G$ be the kernel of the permutation action of $G$
on the $k$ factors of $T$. Then $G/H \le S_k$ and $T^k \le H \le \Aut(T)^k$.

The equation (\ref{1}) above has no solutions unless $\sigma$ is the image of $g$ in $G/H$.
In the latter case it can be written as a system of $k$ equations. We claim that
each of these is a reduced equation of length $l$ applied to
a $d$-tuple of cosets of $T$. Indeed, if
$(n_i,\tau_i(j)) = (n_{i+1},\tau_{i+1}(j))$ then $n_i = n_{i+1}$
so $e_i = e_{i+1}$. Taking all of these equations together,
each variable $t_{ij}$ appears at most $l$ times.

We obtain $k$ subsets of $\{ t_{ij} \}$, each
of size at most $l$, such that each variable $t_{ij}$ occurs
at most $l$ times. Therefore there exists at least
$m:= \left\lceil \frac{k}{l^2-l+1} \right\rceil$ pairwise disjoint subsets,
corresponding to equations in disjoint sets of variables, which are
clearly independent.

We now apply Corollary \ref{almosts} which implies the case $k=1$ of Theorem \ref{chief}.
We conclude that for some $N, \delta > 0$
depending only on $w$ such that, if $|T| \ge N$, then any one of the $k$ equations discussed above
holds with probability $\le |T|^{-\delta}$. Since our system of equations contains
at least $m$ independent equations, we obtain
\[
p_{w,G}(g) \le (|T|^{-\delta})^m \le |T^k|^{-\delta/(l^2-l+1)}.
\]
Setting $\e = \delta/(l^2-l+1) > 0$ we complete the proof of Theorem \ref{chief}.

\end{proof}

\medskip

The following result, which is Lemma 2.2 of \cite{BCP}, will play a key role
in the proof of Proposition \ref{large} and Theorem \ref{newmain}.

\begin{lem}
\label{BCP}
Fix $c \ge 6$. A permutation group of degree $k$ without composition factors
isomorphic to $A_i$ with $i > c$ has order at most $c^{k-1}$.
\end{lem}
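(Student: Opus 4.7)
The plan is to prove this by induction on the degree $k$. The base case $k=1$ is immediate since $|G|=1=c^{0}$. For the inductive step, I would split into three cases according to whether the action of $G$ on the $k$ points is intransitive, transitive but imprimitive, or primitive.

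If $G$ is intransitive with orbits of sizes $k_1,\ldots,k_r$, $r\ge 2$, then $G$ embeds diagonally into $G_1\times\cdots\times G_r$, where $G_i$ is the image of $G$ restricted to its $i$th orbit. Each $G_i$ is a permutation group of degree $k_i<k$, and its composition factors appear among those of $G$, so the inductive hypothesis gives $|G_i|\le c^{k_i-1}$, whence
\[
|G| \le \prod_{i=1}^{r} c^{k_i-1} = c^{k-r} \le c^{k-1}.
\]
If $G$ is transitive but imprimitive, pick a nontrivial block system with $m$ blocks of size $b$, where $mb=k$ and $1<b<k$. Then $G$ embeds into the wreath product $H\wr K$, where $H\le S_b$ is the action of a block stabilizer on its block and $K\le S_m$ is the induced action on the blocks. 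Both $H$ and $K$ are sections of $G$, so they inherit the composition-factor hypothesis; the induction yields $|H|\le c^{b-1}$ and $|K|\le c^{m-1}$, giving
\[
|G| \le |H|^{m}\cdot|K| \le c^{m(b-1)+(m-1)} = c^{mb-1} = c^{k-1}.
\]

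The main obstacle is the primitive case, which really is the content of the Babai--Cameron--P\'alfy result. Here I would invoke the fine structure of primitive permutation groups via the O'Nan--Scott classification: such a $G$ has a (usually unique) minimal normal subgroup $N$ which is either elementary abelian of order $p^d=k$ (the affine case, with $G$ inside the affine group on $N$), or isomorphic to a direct power $T^m$ of a non-abelian finite simple group $T$. The hypothesis that no $A_i$ with $i>c$ occurs as a composition factor of $G$ in particular bounds $T$, and then, working case by case through the O'Nan--Scott types (affine, almost simple, diagonal, product, twisted wreath) using classification-based bounds on the sizes of primitive actions (of Liebeck or Praeger--Saxl type), one verifies $|G|\le c^{k-1}$ in each case. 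This primitive case is where the classification of finite simple groups genuinely enters the argument, and it is the only step of the induction that goes beyond elementary permutation-group combinatorics.
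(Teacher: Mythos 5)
First, note that the paper does not prove this lemma at all: it is quoted verbatim as Lemma 2.2 of \cite{BCP}, so the relevant comparison is with the Babai--Cameron--P\'alfy argument, whose skeleton (induction on the degree, with intransitive, imprimitive and primitive cases) you have correctly identified. Your intransitive step is fine, because the transitive constituents are \emph{quotients} of $G$ and composition factors of quotients are composition factors of $G$. The genuine gap is in the imprimitive step: you apply the inductive hypothesis to the block-stabilizer constituent $H$ after asserting that sections of $G$ ``inherit the composition-factor hypothesis.'' They do not: composition factors of a subgroup (or section) need not be composition factors of $G$. Concretely, take $G=\PSL_n(2)$ (whose only composition factor is $G$ itself, so the hypothesis holds for any $c\ge 6$) acting on the cosets of $A_{n-1}<A_n<G$, with blocks the cosets of $A_n$; then $H\cong A_n$ acting on $n$ points, which violates the hypothesis as soon as $n>c$, so your induction cannot be applied to $H$. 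The standard repair is to route the induction only through quotients and \emph{normal} subgroups: let $N\lhd G$ be the kernel of the action on the $m$ blocks of size $b$. Then $G/N\le S_m$ is a quotient and $N$ is normal, so both inherit the hypothesis; $N$ embeds in the product of its constituents on the blocks, each a quotient of $N$ of degree $b<k$, giving $|N|\le c^{m(b-1)}$, and $|G|=|N|\,|G/N|\le c^{m(b-1)}c^{m-1}=c^{k-1}$.

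Your primitive case also contains a false claim and is otherwise only a pointer: the hypothesis does \emph{not} ``bound $T$'' --- it only excludes large alternating composition factors, and the socle may be, say, $\PSL_2(q)$ or a classical group of huge order; the point of the lemma is precisely that the degree $k$ is then forced to be large compared with $\log|G|$. Moreover, no O'Nan--Scott analysis is needed (nor, in fact, CFSG): the clean dichotomy is that either $G\supseteq A_k$, in which case $A_k$ is a composition factor, so $k\le\max(c,4)$ and $|G|\le k!\le k^{k-1}\le c^{k-1}$; or $G$ is primitive not containing $A_k$, in which case the Praeger--Saxl bound $|G|<4^k$ gives $|G|\le c^{k-1}$ for $k\ge 5$ (since $4^k\le 6^{k-1}$ there), the remaining degrees $k\le 4$ being trivial. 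With the normal-kernel bookkeeping and this primitive-case input, your induction closes; as written, both the imprimitive and the primitive steps have genuine gaps.
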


\begin{proof}[Proof of Proposition \ref{large}]

Fix a positive integer $f$. Among all chief factors $G_1/G_2$ of $G$ (where $G_i \lhd G$)
corresponding to non-abelian composition factors of order $\ge f$ choose one of minimal index
(namely $|G:G_1|$ is minimal).

Write $G_1/G_2 = T^k$ for $k \ge 1$ and a finite simple group $T$. Let $C$ be the centralizer
in $G$ of $G_1/G_2$ and let $K = G/C$. Then $T^k \le K \le \Aut(T^k) = \Aut(T) \wr S_k$.
Let $H$ be the kernel of the permutation action of $G$ on the $k$ copies of $T$.
Then $G_1 \le H \lhd G$ and $G/H \le S_k$. By the minimality of $|G:G_1|$ it follows
that all non-abelian composition factors of $G/H$ have order $< f \le |T|$.

Let $c \ge 6$ be minimal such that $G/H$ does not have a composition factor
$A_i$ with $i > c$. Then $|G/H| \le c^{k-1}$ by Lemma \ref{BCP}.
If $A_i$ ($i \ge 5$) is a composition factor of $G/H$ then $2^i < |A_i| < f \le |T|$.
This shows (assuming $f \ge 64$ as we may) that $c \le \log{|T|}$
(where logarithms are to the base $2$).

It is well known that $|\Out(T)| \le \log{|T|}$ for all finite simple groups $T$.
We conclude that
\[
|K| \le |\Aut(T)|^k |G/H| \le |T|^k |\Out(T)|^k (\log{|T|})^k \le |T^k| (\log{|T|})^{2k}.
\]
Now, given $\delta > 0$ choose $f = f(\delta)$ such that $\log{t} \le t^{\delta/2}$
for all $t \ge f$. Since $|T| \ge f$ we obtain
\[
|K| \le |T^k|^{1 + \delta}.
\]
Therefore $|T^k| \ge |K|^{1-\delta}$, completing the proof.

\end{proof}

Combining results \ref{large} and \ref{chief} we obtain the following.

\begin{thm}
\label{mainfinite}
For every word $1 \ne w \in F_d$ there exist constants $N, \e > 0$
depending only on $w$ such that, if $G$ is a finite group with a non-abelian composition
factor $S$ with $|S| \ge N$, then $G$ has a quotient $K$ with $|K| \ge |S|$
such that $p_{w,K}(g) \le |K|^{-\e}$ for all $g \in K$.
\end{thm}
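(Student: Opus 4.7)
The plan is to reduce Theorem \ref{mainfinite} to Theorem \ref{chief} by a refinement of the construction behind Proposition \ref{large}, arranged so that the semisimple quotient $K$ that we produce has the prescribed $S$ itself as a summand of its socle $S^k$; this is what guarantees $|K|\ge|S|$, whereas Proposition \ref{large} as stated only yields $|K|\ge f(\delta)$. Let $N_0$ and $\e_0$ be the constants supplied by Theorem \ref{chief}. Fix $\delta>0$ small enough that $(1-\delta)\e_0\ge\e_0/2$, set $\e:=\e_0/2$, and take $N$ large enough that $N\ge N_0$ and $(\log t)^2\le t^\delta$ for all $t\ge N$ (base $2$ logarithms throughout).

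Given $G$ with non-abelian composition factor $S$ of order $\ge N$, we may assume that $|S|$ is the largest order of a non-abelian composition factor of $G$ (replacing $S$ by a composition factor of maximum order if necessary; this order is still $\ge N$, and the $K$ produced will then satisfy $|K|\ge|S|$ a fortiori). Instead of picking the chief factor of minimal index, we choose $G_1\lhd G$ minimal subject to the condition that $G/G_1$ has no composition factor isomorphic to $S$, and then $G_2\lhd G$ maximal among normal subgroups of $G$ properly contained in $G_1$. Then $G_1/G_2$ is a chief factor of $G$, and the minimality of $G_1$ forces it to contain $S$ as a composition factor, so $G_1/G_2\cong S^k$ for some $k\ge 1$. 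Letting $K$ be the quotient of $G/G_2$ by the centralizer of $G_1/G_2$ produces $S^k\le K\le\Aut(S^k)$ with $|K|\ge|S^k|\ge|S|$.

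The remaining step is to bound $|K|/|S^k|$ by the argument of Proposition \ref{large}. Let $H\le K$ be the kernel of the permutation action on the $k$ simple factors of $S^k$. Since the conjugation action of $G_1$ on $G_1/G_2\cong S^k$ is inner, $G_1$ lies in the kernel of the associated homomorphism $G\to S_k$, so $K/H$ is a quotient of $G/G_1$. By the choice of $S$ and $G_1$, every non-abelian composition factor of $G/G_1$ has order at most $|S|$; in particular any alternating composition factor $A_j$ of $K/H$ satisfies $j!/2\le|S|$, so $j\le c:=\lfloor\log|S|\rfloor$. Lemma \ref{BCP} now gives $|K/H|\le c^{k-1}$, and combined with $|\Out(S)|\le\log|S|$ this yields
\[
|K|\le|K/H|\cdot|\Aut(S)|^k\le|S^k|(\log|S|)^{2k}.
\]
Our choice of $N$ ensures $(\log|S|)^{2k}\le|S^k|^\delta$, hence $|S^k|\ge|K|^{1-\delta}$; applying Theorem \ref{chief} to $K$ (legal because $|S|\ge N_0$) then gives $p_{w,K}(g)\le|S^k|^{-\e_0}\le|K|^{-(1-\delta)\e_0}\le|K|^{-\e}$ for every $g\in K$. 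The only genuinely new content beyond Proposition \ref{large} and Theorem \ref{chief} is the modified choice of chief factor at the start, which is the most delicate point of the argument.
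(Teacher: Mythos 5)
Your proposal is correct and follows essentially the same route as the paper: reduce to Theorem \ref{chief} by producing a quotient $K$ with $T^k\le K\le\Aut(T^k)$, $|T|\ge|S|$ and $|T^k|\ge|K|^{1-\delta}$, the last bound coming from Lemma \ref{BCP} together with $|\Out(T)|\le\log|T|$, and then converting $|T^k|^{-\e(w)}$ into $|K|^{-\e}$. The only deviation is in how $|K|\ge|S|$ is secured: the paper applies Proposition \ref{large} ``with $|S|$ in the role of $f$'' (i.e.\ it re-runs that proof taking the minimal-index chief factor among those whose composition factors have order $\ge|S|$), whereas you re-run the same counting with a chief factor chosen to be isomorphic to $S^k$ for $S$ of maximal order; both choices are valid, and yours has the mild advantage of using Proposition \ref{large} only as a template rather than relying on its proof with a modified parameter.
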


\begin{proof} Let $N(w), \e(w)$ be as in Theorem \ref{chief}.
Fix $\e$ with $0 < \e < \e(w)$.
Define $\delta = 1 - \e/\e(w)$ and let $f = f(\delta)$ be as
in Proposition \ref{large}.  Set $N = \max(f, N(w))$.

Now let $G, S$ be as in the theorem. Since $|S| \ge f$, Proposition
\ref{large} (applied with $|S|$ in the role of $f$) shows that there is
a quotient $K$ of $G$ and a finite
simple group $T$ with $|T| \ge |S|$ such that $T^k \le K \le \Aut(T^k)$
and $|T^k| \ge |K|^{1-\delta}$.

By Theorem \ref{chief} we have
\[
p_{w,K}(g) \le |T^k|^{-\e(w)} \le |K|^{-(1-\delta)\e(w)} = |K|^{-\e}
\]
for every $g \in G$, as required.
\end{proof}

We now prove Theorem \ref{newmain}.

\begin{proof}

This follows immediately from Theorem \ref{mainfinite} applied
to finite quotients of $\Gamma$ with non-abelian composition
factors of orders tending to infinity.

\end{proof}

\noindent
\begin{proof}[Proof of Theorem \ref{odd}]

We first prove part (i).

Theorem 1.1.2 of \cite{B2} shows that, for certain words $w$, including $x^k$ ($k$ odd)
and $[x_1, \ldots , x_d]$, if $G$ is a finite group, $p_{w,G}(g) \ge \e > 0$ for
some $g \in G$, and $T$ is a finite simple group, then the multiplicity
of $T$ as a composition factor of $G$ is bounded above by some function
$f_1(T,w,\e)$ of $T$, $w$ and $\e$ only.

Now, by Theorem \ref{chief} (applied to a suitable quotient of $G$), if $p_{w,G}(g) \ge \e > 0$
and $T$ is a non-abelian composition factor of $G$, then $|T| \le f_2(w, \e)$
for a suitable function $f_2$. This implies that the product of the orders of all non-abelian
composition factors of $G$ is bounded above by
\[
\prod_{T, |T| \le f_2(w, \e)} |T|^{f_1(T, w, \e)} \le f_3(w, \e),
\]
for a suitable function $f_3$.

Now $\Soc(G/\Rad(G))$ has the form $\prod_i T_i^{n_i}$ for (non-abelian) simple
groups $T_i$, hence $|\Soc(G/\Rad(G))| \le f_3(w, \e)$.
Since $G/\Rad(G)$ is embedded in $\Aut(\Soc(G/\Rad(G)))$ we obtain
\[
|G/\Rad(G)| \le M,
\]
where $M = f_4(w, \e) = f_3(w, \e)^{\log f_3(w, \e)}$.
This proves part (i).

For part (ii), note that, since the Haar measure is $\sigma$-additive, there exists
an odd integer $k > 0$ such that the measure of the elements $g$ of the profinite
group $G$ satisfying $g^k= 1$ is positive. The required conclusion now follows from part (i).

\end{proof}

\bigskip
\bigskip

\end{document}